\documentclass{amsart}
\usepackage{mathrsfs}
\usepackage{graphicx, subfigure}
\usepackage{latexsym,amsfonts,amssymb}
\usepackage{graphicx,epsfig,subfigure}
\usepackage{psfrag}
\usepackage{enumitem}
\usepackage{colortbl}
\usepackage{verbatim}
\usepackage{hyperref}
\usepackage[all]{xy}
\usepackage{epstopdf}
\usepackage{multirow}

\newtheorem{theorem}{Theorem}[section]
\newtheorem{corollary}[theorem]{Corollary}
\newtheorem{lemma}[theorem]{Lemma}
\newtheorem{conjecture}[theorem]{Conjecture}
\newtheorem{proposition}[theorem]{Proposition}
{\theoremstyle{definition}

\newtheorem{remark}[theorem]{Remark}
\newtheorem{example}[theorem]{Example}
}

\newcommand{\Z}{\mathbb Z}
\newcommand{\R}{\mathbb R}
\newcommand{\C}{\mathbb C}

\newcommand{\codim}{\operatorname{{codim}}}


\title[Linear operators and semialgebraic diffeomorphisms]{Surjectivity of linear operators and semialgebraic global diffeomorphisms}

\author[F. Braun, L.R.G. Dias \MakeLowercase{and} J. Venato-Santos]
{Francisco Braun$^*$, Luis Renato Gon\c{c}alves Dias$^\dagger$ \MakeLowercase{and} Jean Venato-Santos$^\ddagger$}

\address{$^*$ Departamento de Matem\'{a}tica, Universidade Federal de S\~ao Carlos, 
	13565-905 S\~ao Carlos, S\~ao Paulo, Brazil} 
\email{franciscobraun@dm.ufscar.br}

\address{$^{\dagger,\ddagger}$ Faculdade de Matem\'{a}tica, Universidade Federal de Uberl\^{a}ndia, 
	38408-100 Uber\-l\^{a}n\-dia, Minas Gerais, Brazil} 
\email{lrgdias@ufu.br}
\email{jvenatos@ufu.br}

\subjclass[2010]{Primary: 14R15; Secondary: 47A99, 14P10, 14D06}

\keywords{Global injectivity, Linear operators, Semialgebraic mappings, Local fibrations}

\thanks{$^*$ Partial support provided by the grants 2019/07316-0 and 2020/14498-4 of the S\~ao Paulo Research Foundation (FAPESP)}

\thanks{$^\dagger$ Partial support provided by the Grant 304163/2017-1 of the National Council for Scientific and Technological Development – CNPq}

\thanks{$^\ddagger$ Support provided by the Grant APQ-02056-21 of Fapemig-Brazil}

\begin{document}

\begin{abstract} 
We prove that a $C^{\infty}$ semialgebraic local diffeomorphism of $\R^n$ with non-properness set having codimension greater than or equal to $2$ is a global diffeomorphism if $n-1$ suitable linear partial differential operators are surjective. 
Then we state a new analytic conjecture for a polynomial local diffeomorphism of $\R^n$. 
Our conjecture implies a very known conjecture of Z. Jelonek. 
We further relate the surjectivity of these operators with the fibration concept and state a general global injectivity theorem for semialgebraic mappings which turns out to unify and generalize previous results of the literature. 
\end{abstract}

\maketitle

\section{Introduction}

Let $F=(f_1, \ldots, f_n): \mathbb{K}^n \to\mathbb{K}^n$, $\mathbb{K}=\R$ or $\C$, be a differentiable mapping. 
We denote by $J(F)$ the Jacobian determinant of $F$. 
We say that \emph{$F$ is proper at $y\in \mathbb{K}^n$} if there exists a neighborhood $V$ of $y$ such that $F^{-1}(\overline{V})$ is compact. 
We denote by $S_F$ the set of points at which $F$ is not proper. 
This is the \emph{non-properness set} of $F$, also known as \emph{Jelonek's set} of $F$. 

It is known that $S_F$ is a semialgebraic set when $F$ is a semialgebraic mapping, see for instance \cite[Theorem 6.4]{J}.
Thus, in this case, it makes sense to consider the  dimension and the codimension of $S_F$. 

The following conjecture was raised by Jelonek in \cite{J}: 

\begin{conjecture}[Jelonek's conjecture]\label{c:je}
Let $F: \R^n \to \R^n$ be a polynomial mapping with nowhere vanishing $J(F)$ and such that $\codim S_F\geq 2$. 
Then $F$ is a bijective mapping.  
\end{conjecture}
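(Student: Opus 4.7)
My plan is to reduce Jelonek's conjecture to the main theorem of the paper, which provides a bridge from local to global for semialgebraic diffeomorphisms under a codimension and PDE-surjectivity hypothesis. A polynomial map $F:\R^n \to \R^n$ with $J(F)$ nowhere vanishing is, by the inverse function theorem, a $C^\infty$ local diffeomorphism, and being polynomial it is automatically semialgebraic. So the hypothesis $\codim S_F \geq 2$ puts us in the framework of the main theorem, and the problem is reduced to verifying the surjectivity of the $n-1$ associated linear partial differential operators; once that is in hand, the main theorem yields that $F$ is a global diffeomorphism, which is strictly stronger than the asserted bijectivity.

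For the surjectivity step, I would proceed as follows. First, write the operators $L_1,\ldots,L_{n-1}$ explicitly in terms of the components and partial derivatives of $F$; I expect these to be first-order linear operators whose characteristic curves are the integral curves of vector fields tangent to the fibers $F^{-1}(y)$, so that $L_i u = g$ becomes a transport equation along a natural foliation by level sets of $F$. Second, I would exploit the semialgebraic setting: the leaves of the foliation are semialgebraic, and under $\codim S_F \geq 2$ the set of degenerate leaves has codimension at least two in the base, which morally rules out codimension-one obstructions to global integration. Third, I would try to construct a global $C^\infty$ solution by integrating $g$ along each leaf and patching using the semialgebraic triviality available on the complement of $S_F$.

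The main obstacle is precisely this surjectivity claim; this is why the authors elevate it to a new analytic conjecture rather than proving it directly. Standard criteria for surjectivity of linear PDEs (Malgrange--Ehrenpreis, H\"ormander's $P$-convexity, etc.) do not apply: the operators have variable polynomial coefficients, they are not hypoelliptic, and surjectivity on $C^\infty(\R^n)$ is a delicate global condition governed by the behavior of $F$ at infinity. Turning the geometric hypothesis $\codim S_F \geq 2$ into an effective statement about the absence of global obstructions to solvability — essentially, showing that no singular fiber at infinity obstructs integration along characteristics — seems to require genuinely new input linking the fibration behavior of $F$ over $\R^n\setminus S_F$ with the transport structure of the $L_i$. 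This is, I believe, the heart of the difficulty that keeps Jelonek's conjecture open.
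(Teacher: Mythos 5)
Your proposal does not prove the statement, and it could not as written: the statement is Jelonek's conjecture, which the paper itself leaves open. Your reduction --- a polynomial map with nowhere vanishing $J(F)$ is a $C^{\infty}$ semialgebraic local diffeomorphism, so by Theorem \ref{t:main} it suffices to prove $\Delta_i^F\left(C^{\infty}(\R^n)\right)=C^{\infty}(\R^n)$ for $n-1$ indices $i$ --- is precisely the paper's own remark (the unlabelled conjecture stated right after Theorem \ref{t:main}, of which the authors say that, by Theorem \ref{t:main}, it implies Conjecture \ref{c:je}). So the entire content of Jelonek's conjecture has been displaced into the surjectivity claim, and your second and third paragraphs only sketch, and then explicitly abandon, an argument for it. A reduction of one open conjecture to another is not a proof.

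Concretely, the gap is this: surjectivity of the vector-field operator $\Delta_i^F$ is governed by the Duistermaat--H\"ormander criterion (Lemma \ref{hj}). Condition (i) (no integral curve in a compact) does hold, by Lemma \ref{l:level}, but condition (ii) is a uniform ``non-return'' condition on the integral curves, i.e.\ on the connected components of the fibers of $F_i$, and nothing in your sketch derives it from $\codim S_F\geq 2$. Integrating $g$ along each leaf produces data orbit by orbit, but to assemble a single $C^{\infty}$ solution you must choose initial values coherently on the orbit space of the foliation by fibers of $F_i$, and leaves that separate, escape every compact, and then re-accumulate on the same compact are exactly what destroys both this patching and condition (ii); the hypothesis $\codim S_F\geq2$ concerns the non-properness set of $F$, not of the submersion $F_i$, and gives no known control on this behaviour. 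The route through fibrations fares no better: Proposition \ref{fibra} shows that $F_i$ being a local fibration on its range would give the desired surjectivity, but ``semialgebraic triviality on the complement of $S_F$'' does not yield $B(F_i)\cap F_i(\R^n)=\emptyset$, and the paper's Pinchuk-based example after Proposition \ref{fibra} shows how delicate the relation between these notions is. As you yourself concede in your final paragraph, this missing step is the heart of the problem, so the proposal establishes nothing beyond what the paper already states.
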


In his paper, Jelonek proved that Conjecture \ref{c:je} is true under the additional assumption $\codim(S_F)\geq3$, the proof working for the semialgebraic case as well (although the conclusion is just the injectivity of $F$), and also that it is true for $n=2$, see \cite[Proposition 2.13]{BDV_S} for a semialgebraic version in $n=2$.  
Much more appealing is the fact, proved in the same paper, that Conjecture \ref{c:je} implies the famous Jacobian conjecture in $\C^n$:  ``A polynomial mapping $F: \C^n \to \C^n$ with $J(F) =1$ is an automorphism.''

We remark that the Pinchuk \cite{P} mappings $P:\R^2 \to \R^2$ constructed in order to disprove the \emph{real Jacobian conjecture}, that a polynomial mapping $\R^n \to \R^n$ with nowhere zero Jacobian determinant is injective, satisfy $\codim \left(S_P\right) =  1$ (see yet a rational parametrization of $S_P$ for a specific Pinchuk mapping in \cite{Ca}), and so the assumption on $\codim \left(S_F\right) \geq 2$ is necessary in Conjecture \ref{c:je}, and it remains to investigate it in dimension $n\geq3$.

On the other hand, for a fixed $C^{\infty}$ mapping $F=(f_1, \ldots, f_n):\R^n \to \R^n$  (resp. an entire analytic mapping $F=(f_1, \ldots, f_n):\C^n \to \C^n$) and for an $i\in \{1, 2, \ldots, n\}$, let $\Delta_i^F: C^{\infty}(\R^n)  \to C^{\infty}(\R^n)$ (resp. $\Delta_i^F: E_n \to E_n$) be the  operator defined by 
\begin{equation}\label{campos}
\Delta_i^F(g)=J(f_1, \ldots,f_{i-1}, g, f_{i+1}, \ldots, f_n), 
\end{equation}
where $C^{k}(M)$ denotes the space of $C^{k}$ functions defined on a given $C^k$ manifold $M$, and $E_n$ denotes the space of entire analytic functions on $\C^n$ with the topology of uniform convergence on compact subsets. 

Stein \cite{St1, St2, St3}, for $n=2$, and Krasi\'nski and Spodzieja \cite{KS}, for $n\geq 2$, proved that for any polynomial mapping $F:\C^n \to \C^n$ with $J(F)=1$, the image $\Delta_i^F(E_n)$ is dense in $E_n$ for $n-1$ indices $i\in\{1,\ldots, n\}$ if, and only if, $F$ is a polynomial automorphism. 
Actually, for $n=2$, it was proved in \cite{St3} that $\Delta_i^F(E_2)$ is closed, hence $F$ is a polynomial automorphism if and only if $\Delta_i^F$ is surjective for $i=1$ or $2$ in the bidimensional case. 
So the following conjecture, stated for $n=2$ in \cite{St1}, implies the Jacobian conjecture: 
\begin{conjecture}
Let $F:\C^n \to \C^n$ be an entire analytic mapping such that $J(F)=1$. 
Then the image $\Delta_i^F(E_n)$ is dense in $E_n$ for $n-1$ different indices $i\in\{1,\ldots, n\}$. 
\end{conjecture}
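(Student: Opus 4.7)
The first step is to interpret $\Delta_i^F$ geometrically. Expanding the determinant $J(f_1,\ldots,f_{i-1},g,f_{i+1},\ldots,f_n)$ along the row of partial derivatives of $g$ gives $\Delta_i^F(g)=X_i(g)$, where $X_1,\ldots,X_n$ is the frame of entire vector fields dual to $df_1,\ldots,df_n$ (this dual frame is well-defined precisely because $J(F)$ is nowhere zero). When $J(F)=1$, these $X_i$ are moreover pairwise commuting, and near any point they are straightened by a local branch of $F^{-1}$ to ordinary coordinate vector fields. The conjecture thus reformulates as: for any entire \'etale $F:\C^n\to\C^n$, the ranges of $n-1$ of the commuting derivations $X_i$ are dense in $E_n$.

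The second step is a Hahn--Banach reformulation. Since $E_n$ is a Fr\'echet space, density of $\Delta_i^F(E_n)$ is equivalent to the statement that every continuous linear functional annihilating $\Delta_i^F(E_n)$ is zero. Continuous linear functionals on $E_n$ are analytic functionals, represented (via Fantappi\`e--Martineau duality) by entire functions of exponential type, and the annihilation condition $\mu\circ\Delta_i^F=0$ translates into a first-order linear equation $X_i^{\ast}\mu=0$ for the dual representative. The plan would then be to show that a simultaneous solution of $X_i^{\ast}\mu=0$ for $n-1$ indices forces $\mu=0$, by propagating the vanishing of $\mu$ along the commuting flows of the $X_i$'s from a reference point where a local inverse of $F$ is available, and then using the remaining ($n{-}1$)-dimensional foliation to sweep out all of $\C^n$.

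The main obstacle is intrinsic. The theorem of Stein and Krasi\'nski--Spodzieja quoted in the excerpt asserts that, in the polynomial case, density for $n-1$ indices is \emph{equivalent} to $F$ being an automorphism, so the conjecture specializes to the Jacobian conjecture. Consequently any complete proof must engage with the global non-properness phenomena that the paper intends to control through the Jelonek set $S_F$, and the purely local straightening argument sketched above cannot close the gap without that input. I expect the hardest step to be excluding a non-zero analytic functional carried ``at infinity'' that is simultaneously annihilated by $n-1$ of the $X_i^{\ast}$, a possibility that genuinely arises precisely when $F$ fails to be proper; bridging this gap, even in the polynomial case, is where new ideas tied to the structure of $S_F$ would have to enter.
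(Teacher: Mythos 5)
There is nothing to compare here on the paper's side: the statement you are addressing is stated in the paper as a \emph{conjecture} (Stein's conjecture for $n=2$, extended to all $n$), and the paper offers no proof of it --- indeed, by the cited results of Stein and Krasi\'nski--Spodzieja, in the polynomial case density of $\Delta_i^F(E_n)$ for $n-1$ indices is \emph{equivalent} to $F$ being an automorphism, so any proof of the statement would in particular settle the Jacobian conjecture. Your write-up is therefore correctly read as a program sketch rather than a proof, and you say so yourself: the final paragraph concedes that the local straightening argument ``cannot close the gap.'' That concession is exactly where the genuine gap lies, and it is not a technical loose end but the whole content of the conjecture.

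Two more concrete points about the sketch. First, the reformulation $\Delta_i^F=X_i$ with $X_1,\dots,X_n$ the commuting frame dual to $df_1,\dots,df_n$ (valid since $J(F)=1$) and the Hahn--Banach reduction to annihilating functionals are fine as far as they go, but the translation of $\mu\circ\Delta_i^F=0$ into ``a first-order linear equation for the dual representative'' is not accurate in general: under Fourier--Borel/Fantappi\`e duality, differentiation becomes multiplication by a coordinate, while multiplication by the non-polynomial entire coefficients of $X_i$ does not become a differential operator, so the adjoint equation is not of the simple form you suggest. Second, the proposed ``propagation of the vanishing of $\mu$ along the commuting flows'' treats the analytic functional $\mu$ as if it were localizable pointwise; analytic functionals are only carried by compact sets (and carriers are not even unique in several variables), so there is no meaningful notion of $\mu$ ``vanishing at a point'' to propagate. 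Since local straightening by branches of $F^{-1}$ only yields local solvability --- which holds for any \'etale $F$ and cannot distinguish injective from non-injective maps --- the essential global obstruction (the behaviour encoded by the non-properness set $S_F$, or a functional ``carried at infinity'' in your language) remains untouched, and the proposal does not constitute a proof.
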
  

The following analogous statement in the real $C^{\infty}$ case was recently proved false in \cite{BS} for $n\geq 3$ (in $\R^2$ it is true: see for instance \cite{BST}):
\begin{quote}
Let $F: \R^n \to \R^n$ be a $C^{\infty}$ mapping with nowhere vanishing $J(F)$. 
If $\Delta_i^F\left(C^{\infty}(\R^n)\right) = C^{\infty}(\R^n)$ for $n-1$ indices $i\in\{1,\ldots, n\}$, then $F$ is injective. 
\end{quote}

The first main result of this paper is to consider the real $C^{\infty}$ case patching together assumptions on the operators $\Delta_i^F$ and on the set $S_F$ in the following theorem. 
In order to consider $S_F$ we assume $F$ to be semialgebraic. 

\begin{theorem}\label{t:main}
Let $F: \R^n \to \R^n$ be a $C^{\infty}$ semialgebraic mapping with nowhere vanishing $J(F)$ and such that $\codim S_F\geq 2$. 
Then $F$ is a bijective mapping if and only if $\Delta_i^F\left(C^{\infty}(\R^n)\right) = C^{\infty}(\R^n)$ for $n-1$ indices $i\in\{1,\ldots, n\}$. 
\end{theorem}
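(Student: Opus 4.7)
For the ``only if'' direction I will use explicit antidifferentiation along $F^{-1}$. For the substantive ``if'' direction, the key will be to interpret each $\Delta_i^F$ as a Lie derivative along a smooth vector field $X_i$ on $\R^n$, use the $n-1$ surjectivity hypotheses to build a fibration-like structure for $F_i:=(f_1,\dots,\widehat{f_i},\dots,f_n)$, deduce injectivity of $F$, and then invoke $\codim S_F\geq 2$ to promote this to bijectivity.

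\textbf{Easy direction.} If $F$ is a bijection with $J(F)$ nowhere zero, then $F$ is a global $C^\infty$ diffeomorphism. For any $h\in C^\infty(\R^n)$ and any $i$, a direct chain-rule computation gives
\[
\Delta_i^F(g)(x) = J(F)(x)\,\bigl(\partial_{y_i}(g\circ F^{-1})\bigr)(F(x)),
\]
so I will pick $g$ such that $g\circ F^{-1}$ is an antiderivative in $y_i$ of $(h\circ F^{-1})/(J(F)\circ F^{-1})$; this produces a smooth preimage of $h$ under $\Delta_i^F$ for every $i$.

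\textbf{Hard direction.} Expanding the determinant in \eqref{campos} along the $i$-th row rewrites $\Delta_i^F(g)=X_i(g)$ for the smooth vector field $X_i=\sum_{k=1}^n(-1)^{i+k}M_{i,k}\partial_{x_k}$, where $M_{i,k}$ denotes the $(i,k)$-minor of $DF$. The key identities $X_i(f_j)=\delta_{ij}J(F)$ make each $X_i$ nowhere zero, tangent to the fibers of $F_i$, and with $f_i$ a strictly monotone quantity along its orbits. Assuming surjectivity of $L_{X_i}$ for $i=1,\dots,n-1$, my strategy will proceed in four steps. \emph{(i) Completeness.} A standard obstruction -- constructing $h\in C^\infty(\R^n)$ that grows too fast along a hypothetical escaping integral curve for any smooth primitive $g$ to satisfy $L_{X_i}g=h$ -- will force $X_i$ to be complete. \emph{(ii) Fibration.} Completeness together with semialgebraicity of $F$ and the monotonicity of $f_i$ on orbits will imply that each orbit of $X_i$ is a closed embedded copy of $\R$ mapped diffeomorphically by $f_i$ onto $\R$, and coincides with a connected component of the corresponding fiber of $F_i$; consequently $F_i$ becomes a locally trivial fibration with $\R$-fibers. \emph{(iii) Injectivity.} Applying (ii) over $i=1,\dots,n-1$, together with the fact that $F$ is a local diffeomorphism (so that $F^{-1}(y)$ is discrete), will force $F^{-1}(y)$ to be a single point for every $y\in F(\R^n)$. \emph{(iv) Surjectivity.} An injective semialgebraic local diffeomorphism $F$ has open semialgebraic image $F(\R^n)$ diffeomorphic to $\R^n$, with $\partial F(\R^n)\subseteq S_F$; the hypothesis $\codim S_F\geq 2$ will then rule out $F(\R^n)\neq\R^n$.

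\textbf{Main obstacle.} The crux is step (ii): translating the purely functional hypothesis $L_{X_i}C^\infty(\R^n)=C^\infty(\R^n)$ into the global geometric assertion that $F_i$ is a locally trivial fibration with $\R$-fibers. The counterexamples of \cite{BS} to the purely $C^\infty$ analogue for $n\geq 3$ show that semialgebraicity cannot be dropped here; it will be essential in particular for ruling out orbits of $X_i$ on which $f_i$ remains bounded, and for pinning down the number of connected components of $F_i^{-1}(y)$.
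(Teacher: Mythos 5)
Your ``only if'' direction is essentially the paper's argument and is fine. The ``if'' direction, however, has a genuine gap located exactly where you put the weight. First, step (i) is not provable as stated: surjectivity of a vector field as an operator on $C^\infty$ does not imply completeness. For instance $g\mapsto e^xg'$ is surjective on $C^\infty(\R)$ although $e^x\partial_x$ is incomplete; the correct translation of surjectivity is the Duistermaat--H\"ormander criterion (Lemma \ref{hj}): no orbit lies in a compact set, and orbit segments with endpoints in a compact $K$ stay in some compact $K'$. Your ``$h$ grows too fast along an escaping curve'' obstruction rules out neither finite-time blow-up nor boundedness of $f_i$ on an orbit. Indeed the concrete conclusion of your step (ii) --- that $f_i$ maps every orbit of $X_i$ onto $\R$ and that $F_i$ is a locally trivial fibration with fiber $\R$ --- is false under the hypotheses you invoke there: for $F(x,y)=\bigl(x/\sqrt{1+x^2},\,y\bigr)$, a semialgebraic local diffeomorphism, $\Delta_1^F(g)=\partial_x g$ is surjective and $X_1=\partial_x$ is complete, yet $f_1$ is bounded on every orbit.

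Second, and more fundamentally, your steps (i)--(iii) purport to derive injectivity of $F$ from semialgebraicity plus surjectivity of the $n-1$ operators alone, postponing $\codim S_F\geq 2$ to the surjectivity step (iv). This cannot work: Example \ref{semi} of the paper exhibits a $C^\infty$ semialgebraic local diffeomorphism of $\R^3$ with $\Delta_1^F$ and $\Delta_2^F$ surjective which is \emph{not} injective. So the codimension hypothesis must already enter the injectivity argument, and your outline leaves no place for it there. The paper's route is different: from surjectivity and Lemma \ref{hj} it proves (Theorem \ref{t:gs-geome}, via a minimal-tangency path argument and the Tib\u ar--Zaharia result, Proposition \ref{prop:TZ}) that every connected component $L$ of $\bigcap_{l\notin\{i,j\}}f_l^{-1}(c_l)$ has connected fibers of $f_j|_L$ and is diffeomorphic to $\R^2$; bijectivity then follows from Theorem 1.1 of \cite{FMS} (Theorem \ref{teo:FMS}), and it is inside that theorem that $\codim S_F\geq 2$ does the work, for injectivity as well as for surjectivity. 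To salvage your fibration-flavored plan you would need codimension input already at the level of step (ii), not only at step (iv).
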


We remark that in the preceding analogous polynomial case in $\C^n$, the set $S_F$ is empty or a hypersurface, according to \cite[Theorem 3.8]{J1}, hence its \emph{real} codimension is greater than or equal to $2$. 
Thus our assumption in Theorem \ref{t:main} is the natural real semialgebraic counterpart of the preceding complex results. 

We also observe that the counterexample constructed in \cite{BS} is not semialgebraic, but we can (and will) construct a semialgebraic one, see Example \ref{semi}. 
This supports the assumption on $S_F$ in Theorem \ref{t:main}. 

A similar result than the above ones in the polynomial case is Theorem 4.1 of \cite{KS} that for any polynomial mapping $F:\mathbb{K}^n \to \mathbb{K}^n$, $\mathbb{K}=\R$ or $\C$, such that $J(F)=1$, then $F$ is a polynomial automorphism if, and only if, $\Delta_i^F(\mathbb{K}[x_1, \ldots, x_n])=\mathbb{K}[x_1, \ldots, x_n]$ for $n-1$ different indices $i\in\{1,\ldots, n\}$. 
The bijective mapping of $\R^2$ given by $F(x,y)=\big(x(x^2+1), y(y^2+1)\big)$ does not satisfy the hypothesis of this result but does satisfy the ones of Theorem \ref{t:main}. 

Related to the above conjectures, we state the following
\begin{conjecture}
Let $F: \R^n \to \R^n$ be a polynomial mapping with nowhere vanishing $J(F)$ and such that $\codim S_F\geq 2$. 
Then $\Delta_i^F(C^{\infty}(\R^n)) = C^{\infty}(\R^n)$ for $n-1$ indices $i\in\{1,\ldots, n\}$. 
\end{conjecture}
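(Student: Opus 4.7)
The plan is to reduce the Conjecture to Jelonek's Conjecture \ref{c:je} via Theorem \ref{t:main}. Any polynomial $F\colon\R^n\to\R^n$ is $C^\infty$ and semialgebraic, so under $J(F)\neq 0$ and $\codim S_F\geq 2$ the hypotheses of Theorem \ref{t:main} are satisfied, giving the equivalence between surjectivity of $\Delta_i^F$ on $C^\infty(\R^n)$ for $n-1$ indices and bijectivity of $F$. The Conjecture is therefore equivalent, in this polynomial setting, to Jelonek's Conjecture \ref{c:je}, which confirms the implication recorded in the introduction and tells us that any proof plan is, in substance, a plan for Jelonek's conjecture.

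The natural next step is to attack bijectivity directly. Since $J(F)\neq 0$, $F$ is a local diffeomorphism, and the very definition of $S_F$ makes $F\colon F^{-1}(\R^n\setminus S_F)\to\R^n\setminus S_F$ a proper local diffeomorphism, hence a finite covering of some degree $d\geq 1$; the base is connected because $\codim S_F\geq 2$, so $d$ is well defined. Surjectivity of $F$ is then immediate once one checks that the image closes up across $S_F$, and the task is to show $d=1$. Jelonek handles $\codim S_F\geq 3$ by exploiting the fact that $\R^n\setminus S_F$ is simply connected in that regime, which forces $d=1$ directly, so the Conjecture reduces further to the case $\codim S_F=2$. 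Alternatively, one may bypass bijectivity and prove surjectivity of $\Delta_i^F$ directly: expanding the determinant along the $i$-th row writes $\Delta_i^F=\sum_j M_{ij}\partial_j$ as a Lie derivative along the polynomial vector field $X_i$, tangent to the common level sets of $\{f_j:j\neq i\}$ and with $X_i(f_i)=J(F)$. Surjectivity on $C^\infty(\R^n)$ then follows, via the semialgebraic fibration machinery alluded to in the abstract, once one knows that $(f_1,\dots,\widehat{f_i},\dots,f_n)$ is a trivial $C^\infty$ fibration with $\R$-fibers.

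The decisive difficulty, in either route, is the case $\codim S_F=2$. Here $\R^n\setminus S_F$ may have nontrivial fundamental group and admit nontrivial finite covers, and the fibers of $(f_1,\dots,\widehat{f_i},\dots,f_n)$ may have several components or fail to be diffeomorphic to $\R$. Pinchuk's examples show that such pathologies are not ruled out by local analytic data, so any argument must leverage the codimension-$2$ hypothesis together with global polynomial invariants (degrees, resultants, the algebraic structure of $S_F$) in an essential way. Identifying the right combination is precisely the open heart of Jelonek's conjecture, and this is where I expect any proof to stall.
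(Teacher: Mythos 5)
The statement you are asked about is not a theorem of the paper but an open conjecture: the paper offers no proof of it, and neither do you. Your proposal is best read as a (correct) analysis of its logical status rather than as a proof. Indeed, since a polynomial map is $C^{\infty}$ and semialgebraic, Theorem \ref{t:main} applies under the hypotheses $J(F)\neq 0$ and $\codim S_F\geq 2$, and it makes the conjectured surjectivity of $\Delta_i^F$ on $C^{\infty}(\R^n)$ for $n-1$ indices equivalent to bijectivity of $F$, i.e.\ to Jelonek's Conjecture \ref{c:je} in exactly the same range of hypotheses. This is consistent with (and slightly sharpens) the paper's remark that the conjecture implies Jelonek's conjecture; note, however, that the intended direction of usefulness in the paper is the opposite of your reduction: the conjecture is proposed as a new \emph{analytic} route toward Jelonek's conjecture and hence the Jacobian conjecture, so "reduce it to Jelonek" is logically sound but strategically empty, since it presupposes the very open problem the conjecture is meant to attack.

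The genuine gap is therefore unavoidable and you name it yourself: everything hinges on the case $\codim S_F=2$, where neither the covering-degree argument (simple connectivity of $\R^n\setminus S_F$ fails), nor the direct route through triviality of the fibration $F_i$ (Proposition \ref{fibra} needs $B(F_i)\cap F_i(\R^n)=\emptyset$, which is not known), is available, and no known polynomial invariant closes it. Two smaller cautions about the sketch itself: properness of $F$ over $\R^n\setminus S_F$ gives a finite covering only onto the components of $F(\R^n)\setminus S_F$, so surjectivity does not simply ``close up across $S_F$'' without an additional argument; and Pinchuk's maps have $\codim S_P=1$, so they illustrate the necessity of the codimension hypothesis rather than pathologies that persist under it. In short: your assessment that any proof must stall at the open heart of Jelonek's conjecture is accurate, and no proof has been produced here, nor does the paper contain one.
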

By Theorem \ref{t:main}, this conjecture implies the Jelonek's one \ref{c:je}, and so the Jacobian conjecture as well. 

Our second main result is Theorem \ref{t:gs-geome} below. 
It provides topological properties that certain leaves of foliations related to $F$ must satisfy when $\Delta_{i}^F$ is surjective. 
This theorem is also the main tool for the proof of Theorem \ref{t:main}. 
\begin{theorem}\label{t:gs-geome}
Let $F=(f_1, \ldots, f_n):\R^n \to \R^n$ be a $C^{\infty}$ mapping with nowhere vanishing $J(F)$ and such that there exists $i\in \{1, \ldots, n\}$ with $\Delta_{i}^F(C^{\infty}(\R^n)) = C^{\infty}(\R^n)$. 
Let $j \in \{1, \ldots, n\}\setminus \{ i\}$ fixed. 
Then for  any nonempty connected component $L$ of  $\bigcap_{l\in \{1,\ldots,n\}\setminus \{i, j\}} f^{-1}_l(c_l)$ it holds: 
\begin{enumerate}[label={\textnormal{(\roman*)}}]
\item\label{1t} The non-empty fibers of the restriction $f_{j}|_L: L\to \R$ are connected. 
\item\label{2t} $L$ is diffeomorphic to $\R^2$. 
\end{enumerate}
\end{theorem}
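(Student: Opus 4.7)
The pivotal object is the first-order differential operator $\Delta_i^F$ itself, which I will read as a vector field. Expanding the Jacobian in \eqref{campos} along the $i$-th row gives
\[
\Delta_i^F(g) \;=\; \sum_{k=1}^n (-1)^{i+k} M_{ik}\,\partial_k g \;=\; X_i(g),
\]
where $M_{ik}$ denotes the $(i,k)$-minor of the Jacobian matrix of $F$. The identities $X_i(f_i)=J(F)$ (nowhere zero) and $X_i(f_\ell)=0$ for $\ell\neq i$ show that $X_i$ is a nowhere vanishing smooth vector field on $\R^n$, tangent to every common level set of $\{f_\ell\}_{\ell\neq i}$. In particular $X_i$ is tangent to $L$, and its restriction $X_i|_L$ is tangent to the (one-dimensional) fibers of $f_j|_L$.

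The plan is to exploit a global trivialization consequence of the hypothesis $\Delta_i^F(C^{\infty}(\R^n))=C^{\infty}(\R^n)$: under this assumption, every nonempty fiber $\bigcap_{\ell\neq i}f_\ell^{-1}(c_\ell)$ of the submersion $(f_\ell)_{\ell\neq i}\colon\R^n\to\R^{n-1}$ is connected, and $f_i$ restricts to it as a diffeomorphism onto $\R$. This is the real $C^{\infty}$ analogue of the solvability statements underlying the Krasi\'nski--Spodzieja and Stein results and is precisely the type of trivialization result developed in \cite{BS} and \cite{BST}; should the statement not be available off the shelf in exactly this form, one supplies the two ingredients separately, namely a H\"ormander-style completeness argument for each individual orbit of $X_i$ (by choosing $g\in C^{\infty}(\R^n)$ whose pullback along a hypothetically truncated orbit is nonintegrable, contradicting existence of a smooth $\phi$ with $X_i\phi=g$) and a separation argument against two hypothetical components of the same fiber of $(f_\ell)_{\ell\neq i}$.

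Granting the trivialization statement, item \ref{1t} follows at once: fix $c\in\R$ and set $c_j:=c$. The fiber $\Sigma := \bigcap_{\ell\neq i}f_\ell^{-1}(c_\ell)$ is either empty or connected, and being a connected subset of $\bigcap_{\ell\neq i,j}f_\ell^{-1}(c_\ell)$ it lies in exactly one of its connected components. Therefore $f_j|_L^{-1}(c) = L\cap\Sigma$ is either empty or equal to $\Sigma$, hence connected in either case.

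For item \ref{2t}, consider $H\colon L\to\R^2$ defined by $H:=(f_i,f_j)|_L$. The linear independence of $df_1,\dots,df_n$ at every point of $\R^n$ implies that $df_i|_{T_pL}$ and $df_j|_{T_pL}$ are linearly independent for every $p\in L$, so $H$ is a local diffeomorphism. By the trivialization lemma, the restriction of $H$ to each nonempty fiber $f_j|_L^{-1}(c)$ is a diffeomorphism onto $\R\times\{c\}$, and combined with item \ref{1t} this yields that $H$ is injective with image exactly $\R\times f_j(L)$. Since $f_j|_L$ is a nonconstant smooth submersion on the connected manifold $L$, the set $f_j(L)$ is a nonempty open interval $J\subseteq\R$. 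Hence $H\colon L\to\R\times J$ is a bijective local diffeomorphism, thus a diffeomorphism, and $L\cong\R\times J\cong\R^2$. The main obstacle in this plan is supplying the trivialization lemma — in particular the connectedness of the fibers of $(f_\ell)_{\ell\neq i}$ — because this is precisely the step that couples the analytic hypothesis on $\Delta_i^F$ to the global topology of the level-set foliation.
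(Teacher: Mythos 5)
Your argument for both items is routed through a single ``trivialization lemma'': that surjectivity of $\Delta_i^F$ forces every nonempty fiber of the full submersion $F_i=(f_\ell)_{\ell\neq i}\colon\R^n\to\R^{n-1}$ to be connected, with $f_i$ restricting to a diffeomorphism of each such fiber onto $\R$. You flag this as the main obstacle, and indeed it is: the statement is false, so the plan cannot be completed in this form. The paper's own Example \ref{semi} (a semialgebraic version of \cite{BS}) refutes it: there $n=3$ and $\Delta_1^F$, $\Delta_2^F$ are surjective, yet for any $c_3<0$ the fiber $f_3^{-1}(c_3)$ has two connected components (one in $z<-1$, one in $z>1$) and each of them meets $f_2^{-1}(c_2)$ for every $c_2$, because $k_\alpha(\mp 1)=\pm h$ with $h>L$ while $k_\alpha(z)\to\mp\infty$ as $z\to\mp\infty$; hence $F_1^{-1}(c_2,c_3)=f_2^{-1}(c_2)\cap f_3^{-1}(c_3)$ is disconnected. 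The point being missed is that conclusion \ref{1t} is strictly weaker than connectedness of the full fibers of $F_i$: it asserts only that $f_j^{-1}(c)\cap L$ is connected for each \emph{single} connected component $L$ of $\bigcap_{\ell\neq i,j}f_\ell^{-1}(c_\ell)$, and several distinct components $L$ may each contribute a piece to the same fiber of $F_i$ (this is exactly the phenomenon behind Corollary \ref{corollary} and the non-injectivity in Example \ref{semi}). So your deduction of \ref{1t} (``$\Sigma$ lies in exactly one component $L$, hence $L\cap\Sigma$ is $\Sigma$ or empty'') rests on a false premise, and no ``separation argument'' can supply the missing lemma. The paper instead proves \ref{1t} intrinsically on the surface $L$: assuming a fiber of $f_j|_L$ is disconnected, it takes an ideal path between the two components minimizing tangencies with the foliation by integral curves of $\Delta_i^F$ (adapting \cite{FGR}) and derives a violation of condition (ii) of the Duistermaat--H\"ormander criterion (Lemma \ref{hj}), contradicting surjectivity; this is where the analytic hypothesis genuinely enters.

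There is also a secondary gap in your item \ref{2t}: even granting \ref{1t}, the map $H=(f_i,f_j)|_L$ need not be onto $\R\times J$, since $f_i$ is strictly monotone along each integral curve but its image may be a proper subinterval of $\R$; the claimed diffeomorphism of each fiber onto $\R\times\{c\}$ is therefore unjustified. Your injectivity of $H$ is fine once \ref{1t} is available, and the argument could be salvaged by noting that the image of $H$ is open with connected vertical slices, hence simply connected and diffeomorphic to $\R^2$; the paper takes a different route, applying Proposition \ref{prop:TZ} to $f_j|_L$, using that its fibers are connected by \ref{1t}, diffeomorphic to $\R$ and closed in $\R^n$ by Lemma \ref{l:level}.
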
 
\begin{remark}
It is clear that the operator $\Delta_i^F$ as defined in \eqref{campos} does not depend on the component $f_i$ of the mapping $F = (f_1, \ldots, f_n)$. 
Actually, by considering $F_i=(f_1, \ldots, f_{i-1}, f_{i+1}, \ldots, f_n)$, we can denote by $\Delta_{i}^{F_i}$ the same operator defined in \eqref{campos}. 
Further, we do not need to assume that the \emph{submersion} $F_i\colon\R^n \to\R^{n-1}$ is part of a local diffeomorphism $F$ in order to state Theorem \ref{t:gs-geome}. 
But the assumption on the surjectivity of $\Delta_{i}^{F_i}$ immediately guarantees the existence of a function $f_i\in C^{\infty}_n$ such that $J(f_1, \ldots,f_{i-1}, f_i, f_{i+1}, \ldots, f_n)\equiv 1$, for instance. 
So we do not miss generality in the statement of Theorem \ref{t:gs-geome}. 
\end{remark}

Actually, for $n=3$, we can characterize the surjectivity of $\Delta_i^F$ as 
\begin{corollary}\label{corollary} 
Let $F = (f_1, f_2, f_3):\R^3 \to\R^3$ be a $C^{\infty}$ mapping with nowhere vanishing $J(F)$ and let $i\in\{1, 2, 3\}$. 
Then $\Delta_{i}^F(C^{\infty}(\R^3)) = C^{\infty}(\R^3)$ if and only if for $j, k \in \{1, 2, 3\}\setminus \{i\}$, $j\neq k$, and for any connected component $L_j$ and $L_k$ of nonempty fibers $f_j$ and $f_k$ respectively, the sets $f_k^{-1}(c) \cap L_j$ and $f^{-1}_j(c) \cap L_k$ are connected for any $c\in \R$. 
\end{corollary}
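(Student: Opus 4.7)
The plan has two directions.

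For the forward direction, assume $\Delta_i^F$ is surjective and apply Theorem \ref{t:gs-geome}(i) twice. With $n=3$ and $\{j,k\}=\{1,2,3\}\setminus\{i\}$, the intersection $\bigcap_{l\neq i,j}f_l^{-1}(c_l)$ appearing in Theorem \ref{t:gs-geome} reduces to a single level set $f_k^{-1}(c_k)$, so part (i) gives, for every connected component $L_k$ of $f_k^{-1}(c_k)$ and every $c\in\R$, that $f_j^{-1}(c)\cap L_k$ is connected (empty intersections being vacuously connected). Swapping the roles of $j$ and $k$ yields the analogous connectedness of $f_k^{-1}(c)\cap L_j$, finishing this direction.

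For the reverse direction, I would reinterpret $\Delta_i^F$ as a derivation along the nowhere-vanishing vector field $X_i$ characterized by $X_i(f_l)=0$ for $l\neq i$ and $X_i(f_i)=J(F)$; for $i=1$, explicitly $X_1=\nabla f_2\times\nabla f_3$. Then surjectivity of $\Delta_i^F$ is equivalent to global solvability of the transport equation $X_i g=h$ on $\R^3$. The orbits of $X_i$ are the connected components of the fibers of the submersion $F_i=(f_j,f_k):\R^3\to\R^2$, and the corollary's connectedness hypotheses force each such orbit to coincide with a full intersection $L_j\cap L_k$. Being a connected component of a closed set, the orbit is closed in $\R^3$; since $f_i$ is strictly monotonic along it (its $X_i$-derivative being $J(F)\neq 0$), the orbit cannot be a circle and should be an embedded copy of $\R$.

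The final step is to build a global trivialization of the locally trivial $\R$-bundle $F_i:\R^3\to F_i(\R^3)\subset\R^2$, gluing local smooth parametrizations of the orbits into a global cross-section. This yields a diffeomorphism $\R^3\simeq F_i(\R^3)\times\R$ in which $X_i$ corresponds to $\partial_t$, so that $X_i g=h$ is solved by direct integration in the last coordinate. The main obstacle is precisely this last step: converting the purely topological connectedness hypotheses into a smooth global product decomposition with no monodromy. The low dimension $n=3$ is essential, since the orbits are $1$-dimensional with the explicit monotonic function $f_i$ along them, and the leaf space embeds into $\R^2$.
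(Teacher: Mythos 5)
Your forward direction is fine and is exactly what the paper does: for $n=3$ the set $\bigcap_{l\notin\{i,j\}}f_l^{-1}(c_l)$ is just $f_k^{-1}(c_k)$, so statement \ref{1t} of Theorem \ref{t:gs-geome}, applied with the two choices of $j$, gives both connectedness conditions. The problem is the reverse direction, which is where the real content lies (the paper does not reprove it; it invokes Theorem 2 of \cite{BS}). Your sketch reduces it to showing that the connectedness hypotheses make $F_i\colon\R^3\to F_i(\R^3)$ a (globally trivial) locally trivial $\R$-bundle over its image, with the leaf space of the orbit foliation embedded in $\R^2$ via $F_i$, after which you solve $X_ig=h$ by integrating along the fibers. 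That intermediate claim is false, and the paper itself contains the counterexample: take $F(x,y,z)=(P(x,y),z)$ with $P=(p,q)$ a Pinchuk map and $i=3$. Here $\Delta_3^F=J(P)\partial_z$ is surjective, and the connectedness hypotheses hold (each component $L_1$ of $f_1^{-1}(c_1)$ is $C\times\R$ with $C$ a component of $p^{-1}(c_1)$, and $q$ is strictly monotone along $C$ since $J(P)\neq0$, so $f_2^{-1}(c)\cap L_1$ is a line or empty; symmetrically for $L_2$). Yet $F_3=P\circ\pi$ is not a local fibration on its range: fibers $P^{-1}(c)\times\R$ have a varying number of components over the connected set $P(\R^2)$, so no homeomorphism $\R^3\simeq F_3(\R^3)\times\R$ compatible with $F_3$ can exist. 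The flaw is that the hypotheses only identify each \emph{orbit} with a single intersection $L_j\cap L_k$ (that part of your argument is correct); they do not make the map from the orbit space to $F_i(\R^3)$ injective, since distinct orbits may lie over the same value of $F_i$.

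Consequently the step you yourself flag as ``the main obstacle'' is not merely technical: with the trivialization taken over $F_i(\R^3)$ it is impossible, and if instead you work over the abstract orbit space you must prove that this space is a Hausdorff surface and that the resulting $\R$-bundle is trivial, which is essentially the full strength of the cited Theorem 2 of \cite{BS}. A workable direct route would be to verify the Duistermaat--H\"ormander criterion (Lemma \ref{hj}) for $\Delta_i^F$ using the connectedness hypotheses (orbits are unbounded by Lemma \ref{l:level}, so condition (i) is free, and the work is in condition (ii)), but your proposal does not supply that argument. As it stands, the sufficiency half is a genuine gap.
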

The sufficient condition above is Theorem 2 of \cite{BS}. 
That this condition is necessary follows directly from \ref{1t} of Theorem \ref{t:gs-geome}. 
See also an $n$-dimensional version of this in \cite{BST}.

The proofs of theorems \ref{t:gs-geome} and \ref{t:main} are presented in Section \ref{s:2}. 

We end the paper in Section \ref{s:3} bringing the concept of local fibration to the subject of the paper. 
In Section \ref{s:3}, we discuss how this topological concept is related to the surjectivity of the operators defined in \eqref{campos}. 
Also, motivated by the works of Byrnes and Lindquist \cite{BL} and Campbell \cite{Camp}, we show that if $F:\R^n \to \R^n$ is  a semialgebraic local homeomorphism such that either $F$ or $F_i$ (as above defined) is a  local fibration on its range, then $F$ is injective, see Theorem \ref{p:image}. 

\section{Proof of theorems \ref{t:main} and \ref{t:gs-geome}}\label{s:2}

For a given mapping $F = (f_1, \ldots, f_n):\R^n \to \R^n$ and for any positive integer $i\leq n$, we will denote by $F_i:\R^n \to\R^{n-1}$ the mapping with coordinate functions $(f_1, \ldots, f_{i-1}, f_{i+1}, \ldots, f_n)$.

We begin gathering the following straightforward properties: 

\begin{lemma}\label{l:level}
Let $F=(f_1, \ldots, f_n):\R^n \to \R^n$ be a local homeomorphism. 
Let $i\in\{1, \ldots, n \}$ fixed and $R$ be a connected component of a nonempty fiber of $F_i:\R^n\to \R^{n-1}$. 
Then
\begin{enumerate}[label={\textnormal{(\alph*)}}]
\item\label{homeo} $R$ is homeomorphic (diffeomorphic if $F$ is $C^1$) to $\R$ and its both ends are unbounded. 
\item\label{monotone} The function $f_{i}$ is strictly monotone along $R$. 
\item\label{conn} If $F$ is $C^{\infty}$, the integral curves of $\Delta^F_{i}$ are the connected components of the nonempty fibers of $F_{i}$.
\end{enumerate}
\end{lemma}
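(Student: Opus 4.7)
The plan is to extract all three items from the local trivialization furnished by $F$. I would fix $p\in R$ and a neighborhood $U\ni p$ on which $F|_U:U\to V$ is a homeomorphism onto an open set $V\subset\R^n$. Then $F_i^{-1}(F_i(p))\cap U$ is the preimage under $F|_U$ of the straight segment $V\cap\{y_j=f_j(p):j\neq i\}$, hence homeomorphic to an open interval. Consequently $R$ inherits the structure of a connected topological $1$-manifold without boundary (smoothly so when $F$ is $C^1$ with $J(F)\neq 0$, by the inverse function theorem). For the first half of \ref{homeo}, I would recall that a connected $1$-manifold without boundary is homeomorphic to $\R$ or to $S^1$; the same local chart identifies $f_i|_{R\cap U}$ with the $i$-th coordinate along the segment, so $f_i|_R$ is locally a homeomorphism onto an interval and admits no local extremum, which rules out compactness and forces $R\cong\R$.

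For the unboundedness part of \ref{homeo}, I would observe that $R$ is closed in $\R^n$: its closure is connected and still contained in the closed fiber $F_i^{-1}(c)$, hence coincides with $R$ by maximality of the component. Given a homeomorphism $\phi:\R\to R$, any bounded sequence $\phi(t_n)$ with $|t_n|\to\infty$ would admit a subsequence converging in $\R^n$ to some $q\in R$ by closedness; applying $\phi^{-1}$ would then force $t_n$ to cluster at the finite point $\phi^{-1}(q)$, contradicting $|t_n|\to\infty$. For \ref{monotone}, the local description above already shows that $f_i|_R$ is continuous and locally strictly monotonic; composition with $\phi$ yields a continuous, locally strictly monotonic function on $\R$, which has no local extremum and is therefore globally strictly monotonic (otherwise the extreme value theorem applied to a suitable compact subinterval would produce an interior local extremum).

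For \ref{conn}, I would interpret $\Delta_i^F$ as a first-order linear differential operator corresponding, via cofactor expansion along the $g$-row, to a $C^\infty$ vector field $X_i^F$ on $\R^n$. Since $\Delta_i^F(f_j)=0$ for $j\neq i$ (Jacobian with a repeated row), $X_i^F$ is everywhere tangent to the fibers of $F_i$; since $\Delta_i^F(f_i)=J(F)$ is nowhere vanishing, $X_i^F$ is nowhere zero. Thus each maximal integral curve of $X_i^F$ is a connected $1$-dimensional submanifold sitting inside a connected component of a fiber, and by the $1$-dimensionality of the fibers from the first paragraph together with maximality it must fill that component. I do not foresee any serious obstacle: the lemma packages together several standard consequences of the local homeomorphism property, and the only mildly delicate step is the unboundedness argument, which combines closedness of $R$ in $\R^n$ with the homeomorphism $R\cong\R$ established in \ref{homeo}.
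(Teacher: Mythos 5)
Your argument is correct, and in fact the paper offers no proof of Lemma \ref{l:level} at all (it is introduced as a collection of ``straightforward properties''), so there is no authorial proof to compare against; your write-up is a sound filling-in of exactly the kind of local-chart reasoning the authors presumably had in mind. Only three small points deserve polishing. First, $V\cap\{y_j=f_j(p):j\neq i\}$ need not be a single segment for an arbitrary $V$; shrink $U$ so that $V$ is a ball (or just note that an open subset of a line is still a $1$-manifold), which is all you use. Second, in \ref{conn} your closing step ``by maximality it must fill that component'' is asserted rather than argued; the clean way is the standard connectedness argument: the image of the maximal integral curve through $p$ is nonempty, open in $R$ (flow box plus the fact that $R$ is locally an arc traversed with nonzero speed) and closed in $R$ (if $q\in R$ is a limit of curve points, the flow box at $q$ shows the curve reaches $q$ in finite time), hence equals $R$. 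Third, \ref{conn} tacitly requires $J(F)$ nowhere vanishing --- as you correctly assume --- since otherwise the field $X_i^F$ can vanish along a fiber component (e.g.\ $F(x,y)=(x,y^3)$, $i=1$); the paper's statement is loose here, but nonvanishing of $J(F)$ holds in every place the lemma is invoked, so your reading matches the intended use.
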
 

In particular, for any positive integer $k\leq n$, the non-empty connected components of $\cap_{j=1}^kf_{\sigma(j)}^{-1}(c_j)$ are unbounded for any injective mapping $\sigma: \{1,\ldots, k\} \to \{1,\ldots, n\}$. 
We remark that this could not be the case if $(f_{\sigma(1)}, \ldots, f_{\sigma(k)}): \R^n \to \R^k$ is just a \emph{submersion} (not coming from coordinate functions of a local homeomorphism) with $n\geq3$, see for instance  \cite[Section II]{Costa}: there exists a submersion $g\colon\mathbb{R}^3\to\mathbb{R}^2$ with a fiber having one connected component diffeomorphic to $S^1$. 

A key point in our paper is the following characterization of surjectivity of vector fields given in \cite{DH}.

\begin{lemma}[Part of Theorem 6.4.2 of \cite{DH}]\label{hj}
Let $M$ be a $C^{\infty}$ manifold and $X: C^{\infty}(M)\to C^{\infty}(M)$ be a vector field. 
Then $X(C^{\infty}(M)) = C^{\infty}(M)$, if and only if 
\begin{enumerate}[label={\textnormal{(\roman*)}}]
\item No integral curve of $X$ is contained in a compact subset of $M$. 
\item For all compact $K\subset M$, there exists a compact $K'\subset M$ such that every compact interval on an integral curve of $X$ with endpoints in $K$ is contained in $K'$. 
\end{enumerate}
\end{lemma}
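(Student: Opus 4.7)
The proof is a biconditional and the main technical device in both directions is the elementary identity
\[
u(\gamma(t)) - u(\gamma(0)) = \int_0^t (Xu)(\gamma(s))\, ds,
\]
valid for any $C^1$ function $u$ and any integral curve $\gamma$ of $X$. It relates the boundedness of a candidate preimage $u$ on a compact set to the integral of $Xu$ along segments of integral curves, and this tension drives everything.

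For necessity, suppose $X(C^\infty(M)) = C^\infty(M)$. To establish (i), I would first note that $X$ cannot vanish at any $p_0$: otherwise $\{p_0\}$ is a compact integral curve and $(Xf)(p_0)=0$ for all $f$, contradicting the existence of a preimage of any $g$ with $g(p_0)\neq 0$. Given a non-stationary integral curve $\gamma$ trapped in a compact $K$, I would pick $p$ in its $\omega$-limit set and a nonnegative bump $g$ with $g(p)>0$ of small support $V$; recurrence of $\gamma$ to $V$ together with a uniform lower bound on the sojourn time in $V$ (using $\sup_{V}|X|<\infty$) forces $\int_0^t g(\gamma(s))\,ds\to\infty$, while the left side of the identity applied to any preimage $u$ is bounded by $2\sup_K|u|$. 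For (ii), if it fails there is a compact $K$, an exhaustion $K_n\nearrow M$, and integral curve segments $\gamma_n:[0,T_n]\to M$ with endpoints in $K$ but visiting points $p_n\notin K_n$; passing to a subsequence so that $\{p_n\}$ is locally finite, I would construct a nonnegative $g\in C^\infty(M)$ supported away from $K$ with peaks near each $p_n$ large enough that $\int_0^{T_n} g(\gamma_n(s))\,ds\to\infty$, contradicting boundedness of $u(\gamma_n(T_n))-u(\gamma_n(0))$ for any preimage $u$.

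For sufficiency, assume (i) and (ii). Since any zero of $X$ would be a compact integral curve, $X$ is nowhere zero. My plan is to construct a global straightening: a smooth hypersurface $\Sigma\subset M$ transverse to $X$ and meeting every maximal integral curve in exactly one point, together with a diffeomorphism $\Phi:\mathbb{R}\times\Sigma\to M$, $(t,\sigma)\mapsto\phi_t(\sigma)$, carrying $\partial_t$ to $X$. Under this identification the equation $Xu=g$ becomes $\partial_t\tilde u=\tilde g$, solved explicitly by $\tilde u(t,\sigma)=\int_0^t \tilde g(s,\sigma)\,ds$, which is smooth; pushing back gives $u\in C^\infty(M)$ with $Xu=g$ as required.

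The hard part is the construction of $\Sigma$ and the diffeomorphism $\Phi$. Locally the flow box theorem supplies transversal disks everywhere, but the global glueing requires two distinct ingredients: first, no maximal integral curve is recurrent or periodic, else it would cross a local transversal multiple times and obstruct the leaf space from being Hausdorff; second, the flow must be sufficiently proper that transversals over compact sets extend compatibly, so the quotient by the flow carries a manifold structure. Condition (i) rules out periodic and trapped orbits, yielding non-recurrence, while condition (ii) provides exactly the propriety needed to match local straightenings along an exhaustion $K_n\nearrow M$ provided by (ii) itself. The technical heart of the argument is assembling $\Sigma$ inductively from flow boxes adapted to this exhaustion and verifying that $\Phi$ is a global diffeomorphism; once this is in place the surjectivity is immediate from the explicit formula.
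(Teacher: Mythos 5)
The paper does not actually prove this lemma: it is quoted verbatim as part of Theorem 6.4.2 of \cite{DH}, whose proof there is functional-analytic (solvability theory for operators of real principal type), so your attempt can only be judged on its own merits. Your necessity direction is essentially sound as a sketch: the identity $u(\gamma(t))-u(\gamma(0))=\int_0^t (Xu)(\gamma(s))\,ds$, combined with nonnegative bumps whose heights are calibrated against a lower bound for the sojourn time in a fixed small ball (radius divided by $\sup|X|$ there), kills both a trapped orbit and a failure of (ii); note also that since $p_n\notin K_n$ for an exhaustion $K_n\nearrow M$, local finiteness of $\{p_n\}$ is automatic, no subsequence needed.

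The sufficiency direction, however, has a genuine gap. The whole content of that implication is precisely the claim that (i) and (ii) force the flow to be globally parallelizable, i.e.\ that there is a hypersurface $\Sigma$ meeting every orbit exactly once and a trivialization of $M$ along the flow; you label this ``the technical heart'' and sketch an induction over flow boxes, but never carry it out. This step is a theorem in its own right (it is essentially the Bhatia--Szeg\H{o} result that dispersive flows are parallelizable, upgraded to the smooth category, or equivalently the statement that the orbit space is a Hausdorff manifold and $M\to M/\mathbb{R}$ is a trivial principal $\mathbb{R}$-bundle), and asserting it is not proving it. Two of your supporting claims are also wrong as written. First, (i) alone does not ``yield non-recurrence'': an orbit can return to any neighborhood of one of its points infinitely often while making unbounded excursions, so it need not lie in a compact set; excluding recurrence genuinely uses (ii) --- by (ii) the returning segments with endpoints in a small compact neighborhood of a recurrent point all lie in one compact $K'$, hence a half-orbit is trapped in $K'$, and its $\omega$-limit set then contains an entire orbit inside a compact set, contradicting (i). Second, a diffeomorphism $\Phi:\mathbb{R}\times\Sigma\to M$ carrying $\partial_t$ to $X$ cannot exist when orbits are incomplete (take $X=\partial_x$ on $(0,1)\times\mathbb{R}^{n-1}$, where (i) and (ii) hold); one must first replace $X$ by $fX$ with $f>0$ smooth so that the flow becomes complete, checking that this changes neither (i), (ii) nor surjectivity of the operator. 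With these repairs, and with the parallelizability statement actually established (Hausdorffness of the leaf space from (i)+(ii), its manifold structure, freeness and properness of the $\mathbb{R}$-action, triviality of the resulting principal bundle), your integration formula does conclude; as it stands, the decisive step is only announced.
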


We first provide the 

\begin{proof}[Proof of Statement \ref{1t} of Theorem \textnormal{\ref{t:gs-geome}}] 
	
The proof presented here is an adaptation to a bidimensional submanifold ($L$) of the proof given in \cite[Proposition 1.4]{FGR} in the plane. 
Lemma \ref{l:level} will be used throughout the proof. 
Observe that $L$ is invariant by $\Delta_i^F$: the integral curves of $\Delta_i^F$ in $L$ are given by the connected components of the fibers of $f_j|_L$. 
This is a regular foliation of $L$. 
Below, when referring to the flow of $\Delta_i^F$ we mean this foliation, i.e. the flow of $\Delta_i^F$ restricted to $L$.  

In the proof we will denote by $\gamma_{p} = \gamma_p(t)$ the integral curve of $\Delta_{i}^F$ passing through $p\in L$, i.e., such that $\gamma_p(0) = p$. 
Further, we will say that a continuous path $\sigma$ in $L$ is \emph{tangent} to a given integral curve $\gamma$ at a point $q$ when $\sigma$ is not tranversal to $\gamma$ at $q$. 
Finally, we will say that a continuous injective path $\sigma$ in $L$ is \emph{ideal} if it has at most finitely many points of tangency with the flow of $\Delta_i^F$. 

We begin proving that \emph{any two points of $L$ can be joined by an ideal path}. 
Indeed, for a given $p\in L$, we define $Z(p) = \{q\in L\ |\ \textnormal{ there exists an ideal path from } p \textnormal{ to } q\}$. 
It is enough to prove that $Z(p) = L$. 

By the flow box theorem, \cite[Lemma 3, pag 69]{Camacho}, it follows that $Z(p)$ is non-empty. 
\emph{$Z(p)$ is open:} Indeed, let $x \in Z(p)$ and denote by $\sigma_1$ an ideal path joining $p$ to $x$. 
Applying the flow box theorem to the point $x$, we get a neighborhood of $x$ such that any point $y$ of it can be joined to $x$ by an ideal path $\sigma_2$. 
Clearly the concatenation of $\sigma_1$ and $\sigma_2$ is an ideal path connecting $p$ to $y$. 
\emph{$Z(p)$ is closed:} Indeed, let $\{x_n\}$ be a sequence in $Z(p)$ converging to $x$. 
Similarly as above, for an $n$ big enough, we manage to construct an ideal path from $p$ to $x$, passing by $x_n$, and so $x\in Z(p)$. 
So $Z(p) = L$ as we wanted. 

Now we pass to the proof of the result. 
Acting by contradiction, we suppose that for some $c\in \R$ the set ${f_j|_L}^{-1}(c) = L\cap f_{j}^{-1}(c)$ is not connected. 
We let $R_1$ and $R_2$ be two distinct connected components of ${f_j|_L}^{-1}(c)$ and we consider $p_1 \in R_1$ and $p_2 \in R_2$. 
So $R_k = \gamma_{p_k}$, $k=1,2$. 
We denote by $\Omega(p_1, p_2)$ the (non-empty) set of ideal paths $\sigma: [0,1]\to L$ such that $\sigma(0) = p_1$ and $\sigma(1) = p_2$. 
We then fix a path $\sigma \in \Omega(p_1, p_2)$ that \emph{minimizes the number of tangencies with the flow of $\Delta_i^F$}. 

\emph{We claim that \emph{$R_1 \cap \sigma([0,1]) = \{ p_1\}$ and $R_2 \cap \sigma([0,1]) = \{ p_2\}$}}. 
If, for instance, $R_1 \cap \sigma([0,1]) \neq \{ p_1\}$, let $t_1\in (0,1]$ be the greatest time $t$ such that $\sigma(t) \in R_1$. 
We consider $\lambda$ to be the new path formed by the concatenation of $R_1$ from $p_1$ to $\sigma(t_1)$ and by $\sigma([t_1, 1])$. 
Then by applying the flow box theorem in this interval of $R_1$, we can modify $\lambda$ to a new path $\overline{\sigma} \in \Omega(p_1,p_2)$ without tangency points from $0$ to $\sigma(t)$, for a $t>t_1$ close enough of $t_1$. 
Since $\sigma$ must have a tangency point in $(0,t_1)$, it follows that $\overline{\sigma}$ has less tangencies than $\sigma$, a contradiction with the choice of $\sigma$. 
The proof for $R_2$ is analogous. 
The claim is proved. 
So, in the reasoning below, we can fix our attention to the open interval $(0,1)$. 

Since $f_j\circ \sigma$ has a maximum or a minimum point in $(0,1)$, it follows that the finite set 
$$
\{s \in (0,1)\ |\ \sigma \textnormal{ is tangent to } \gamma_{\sigma(s)} \textnormal{ at some point of } \sigma([0,1])\} 
$$ 
is not empty and we can let $s_0>0$ be its minimum. 
We have two possibilities: 
(i) $\sigma$ is tangent to $\gamma_{\sigma(s_0)}$ at $\sigma(s_0)$ or (ii) $\sigma$ is transversal to $\gamma_{\sigma(s_0)}$ at $\sigma(s_0)$. 

Below we will show that assuming either (i) or (ii) leads to a contradiction. 
This will finish the proof. 

First we assume (i) and let $T:=\{s \in (0,s_0) \mid \# (\gamma_{\sigma(s)} \cap \sigma([0,1])) \geq 2\}$, where as usual $\# Z$ denotes the cardinality of a set $Z$. 
By continuous dependence, it follows that $s\in T$ for all $s<s_0$ close enough to $s_0$. 
Thus $T \neq \emptyset$ and we may consider $\widetilde{s} = \inf T$. 

If $\widetilde s=0$, then $\widetilde s \notin T$. 
If $\widetilde s > 0$, we have that $\gamma_{\sigma(\widetilde s)}$ is transversal to $\sigma([0,1])$ by the choice of $s_0$. 
Then, it follows by the flow box theorem that $\widetilde s \notin T$. 
In any case, so, we have that $\tilde s \notin T$. 

We are going to apply Lemma \ref{hj} to show that $\Delta_i^F$ is not surjective, the aimed contradiction. 
Let the compact set $K:=\sigma([0,1])$ and let $K'$ be any given compact set. 
The integral curve $\gamma_{\sigma(\widetilde s)}$ is not bounded in any direction by Lemma \ref{l:level}. 
So we consider a tubular neighborhood $N$ of it escaping from $K'$ in both directions. 
Then for any $t$ such that $\sigma(t) \in N$ it follows that the integral curve $\gamma_{\sigma(t)}$ stays in $N$ before it can eventually return to $K$. 
In particular, by the definition of $\widetilde s$, there does exist $s \in T$ close enough to $\widetilde s$ such that the orbit $\gamma_{\sigma(s)}$ cuts at least twice the compact $K$ but in between it escapes $K'$. 
By Lemma \ref{hj} it follows that $\Delta_i^F$ is not surjective. 
So (i) cannot be true. 

On the other hand, if (ii) is in force, let $s_1>s_0$ be such that $\sigma(s_1) \in \gamma_{\sigma(s_0)}$ and $\sigma $ is tangent to $\gamma_{\sigma(s_0)}$ at $\sigma(s_1)$. 
We assume that $s_1$ is the smallest $s\in (s_0, 1)$ with this property. 

We define now the continuous path $\widetilde{\sigma}$ being the concatenation of the the following three paths: the path $\sigma([0,s_0])$, the path $\gamma_{\sigma(s_0)}$ from $\sigma(s_0)$ to $\sigma(s_1)$, and the path $\sigma([s_1,1])$. 
By using once more the flow box theorem in a tube around this interval of $\gamma_{\sigma(s_0)}$, we can approximate the path $\widetilde{\sigma}$ to a path $\overline{\sigma} \in \Omega(p_1,p_2)$ with less tangencies with the flow of $\Delta_i^F$ than the path $\sigma$, a contradiction with the choice of $\sigma$. 
\end{proof}

For our next proof we recall the following consequence of \cite[Proposition 2.7]{TZ}:

\begin{proposition}\label{prop:TZ}
Let $M \subset \mathbb{R}^n$ be a $C^{\infty}$ submanifold of dimension $m + 1$ and $g \colon M \to \mathbb{R}^m$ be a $C^{\infty}$ submersion. 
If the fibers of $g$ are diffeomorphic to $\mathbb{R}$ and closed in $\R^n$, then $M$ is diffeomorphic to $\R^{m+1}$.
\end{proposition}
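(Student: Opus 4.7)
The plan is to exhibit $g$ as a trivial smooth $\R$-bundle over $\R^m$, so that $M\cong\R^m\times\R=\R^{m+1}$. This reduces the task to two points: establishing local triviality of $g$, and then passing from local to global triviality using contractibility of the base.

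For the local triviality I would appeal to \cite[Proposition 2.7]{TZ}, which provides an Ehresmann-type criterion guaranteeing that a submersion whose fibers are closed in an ambient Euclidean space is a locally trivial smooth fibration. The hypothesis that the fibers of $g$ are closed \emph{in $\R^n$}, rather than merely in $M$, is precisely the input that controls the behaviour of horizontal lifts (with respect to the Riemannian metric induced from $\R^n$) and prevents them from escaping to infinity along a fiber; it is what lets the flow-box construction around a full fiber close up into a trivialization over a neighbourhood of a base point. Combined with the hypothesis that fibers are diffeomorphic to $\R$, one concludes that $g\colon M\to\R^m$ is a smooth locally trivial fibration with standard fiber $\R$.

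For the passage to global triviality, since $\R^m$ is contractible, any smooth locally trivial $\R$-bundle over it is trivial. A clean way to see this is to cover $\R^m$ by star-shaped open sets centered at the origin, pick a trivialization on each, and glue using a partition of unity; equivalently, one integrates the horizontal lifts of the radial vector field on $\R^m$, which are complete thanks to local triviality. Either route yields a diffeomorphism $M\cong\R^m\times\R$ commuting with $g$, whence $M\cong\R^{m+1}$.

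The main obstacle I expect is the first step, namely matching the ``closed fibers in $\R^n$'' condition with whatever properness-type hypothesis \cite[Proposition 2.7]{TZ} is formulated under, and checking that the intrinsic realization of $M$ as a submanifold of $\R^n$ introduces no obstruction beyond those handled there. Once local triviality is granted, the second step is a standard application of bundle theory over a contractible base.
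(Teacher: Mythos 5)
Your proposal follows essentially the same route as the paper: the paper offers no independent argument but records this proposition as a direct consequence of \cite[Proposition 2.7]{TZ}, and you likewise place all of the essential content (local triviality of $g$, coming from the fibers being closed in $\R^n$ and diffeomorphic to $\R$) on that same citation. Your added globalization step is standard and correct in substance—although, rather than gluing trivializations with a partition of unity or arguing completeness of metric horizontal lifts, it is cleaner simply to invoke that a locally trivial fiber bundle over a contractible paracompact base such as $\R^m$ is trivial—so that $M \cong \R^m \times \R = \R^{m+1}$ as claimed.
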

  
\begin{proof}[Proof of Statement \ref{2t} of Theorem \textnormal{\ref{t:gs-geome}}]
By Statement \ref{1t} of the theorem, together with \ref{homeo} of Lemma \ref{l:level}, it follows that the non-empty fibers of the restriction $f_{j}|_L: L\to \R$ are diffeomorphic to $\R$. 
Also, they are clearly closed in $\R^n$. 
Since $f_j|_L(L)$ is an interval (and so diffeomorphic to $\R$), it follows from Proposition \ref{prop:TZ} that $L$ is diffeomorphic to $\R^2$. 
\end{proof}

The following result of \cite{FMS} will be used in the proof of Theorem \ref{t:main}. 
\begin{theorem}[Theorem 1.1 of \cite{FMS}]\label{teo:FMS}
Let $F = (f_1, \ldots, f_n):\R^n \to \R^n$ be a $C^{2}$ semialgebraic mapping with nowhere vanishing $J(F)$ and such that $\codim S_F\geq2$. 
If for all $i,j\in \{1, \ldots, n\}$ with $i\neq j$, any nonempty connected component of  $\bigcap_{l\in \{1,\ldots,n\}\setminus \{i, j\}} f^{-1}_l(c_l)$ is diffeomorphic to $\R^2$, then $F$ is bijective.
\end{theorem}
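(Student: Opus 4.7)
The plan is to reduce the proof of bijectivity to proving global injectivity of $F$, and then to obtain injectivity by a leafwise argument that invokes the two-dimensional semialgebraic case of the result (the one mentioned as \cite[Proposition 2.13]{BDV_S}).

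First, I would show that for the class of maps in the hypothesis, injectivity implies bijectivity. Since $J(F)$ never vanishes, $F$ is a $C^{2}$ local diffeomorphism, hence open. As $F$ is semialgebraic and $\codim S_F \geq 2$, the restriction $F\colon F^{-1}(\R^n\setminus S_F)\to \R^n\setminus S_F$ is a proper local diffeomorphism between connected manifolds, hence a covering. If $F$ is injective, this covering has a single sheet, so $F$ restricts to a diffeomorphism onto $\R^n\setminus S_F$. A topological argument using invariance of domain and the fact that $\R^n$ is not homeomorphic to $\R^n$ minus a nonempty closed subset of codimension at least two then forces $S_F=\emptyset$ and $F(\R^n)=\R^n$.

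For injectivity I would argue by contradiction. Suppose $F(p)=F(q)=y=(y_1,\dots,y_n)$ with $p\neq q$, and fix a pair $i\neq j$. Both points lie in $A_{ij}:=\bigcap_{l\neq i,j}f_l^{-1}(y_l)$; let $L_p$ and $L_q$ be the connected components containing $p$ and $q$. By hypothesis each is diffeomorphic to $\R^2$, and since $J(F)\neq 0$ the restriction $G:=(f_i,f_j)|_{L_p}\colon L_p\to\R^2$ is a $C^2$ semialgebraic local diffeomorphism. The key observation is that the non-properness set $S_G\subset\R^2$ embeds into $S_F\cap P_{ij}$, where $P_{ij}:=\{x\in\R^n:x_l=y_l\text{ for }l\neq i,j\}$ is a $2$-plane: any sequence in $L_p$ tending to infinity whose image under $G$ converges produces, via $F$, a sequence of points of $P_{ij}$ converging to a point of $S_F$. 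For generic $y$, $P_{ij}$ is transverse to $S_F$, so the intersection (and therefore $S_G$) has dimension $\leq 0$. Applied to the $2$-manifold $L_p\cong\R^2$, the two-dimensional semialgebraic case then gives that $G$ is bijective, whence $L_p\neq L_q$.

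Thus, for every pair $(i,j)$, the points $p$ and $q$ lie in distinct connected components of $A_{ij}$. The main obstacle is to turn this combinatorial information into a contradiction. I would attempt it by intersecting pairs: $A_{ij}\cap A_{ik}=\bigcap_{l\neq i}f_l^{-1}(y_l)$ is a $1$-dimensional level set of the submersion $F_i$, whose components are diffeomorphic to $\R$ with $f_i$ strictly monotone along each (Lemma \ref{l:level}); since $f_i(p)=y_i=f_i(q)$, the points $p,q$ also lie in distinct components of this curve. Varying $y$ in a small neighborhood and using openness of $F$, I would deduce that for every $y'$ near $y$ the preimage $F^{-1}(y')$ has at least two points distributed among distinct components of each $A_{ij}(y')$, and then track these components using the leafwise bijections established in the previous step. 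The goal is to show that the covering multiplicity of $F$ on $\R^n\setminus S_F$ cannot exceed one: each $2$-dimensional leaf, being mapped bijectively onto $\R^2$ by $(f_i,f_j)$, forces a monodromy constraint around the singular set $S_F$ that is incompatible with multi-sheetedness when $\codim S_F\geq 2$. Closing this last step cleanly, balancing the semialgebraic structure against the leaf topology, is the delicate point I expect to absorb most of the work.
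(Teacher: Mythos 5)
Note first that the paper does not prove Theorem \ref{teo:FMS} at all: it is imported verbatim as Theorem 1.1 of \cite{FMS}, so there is no internal proof to compare with; your proposal must stand on its own, and as it stands it has a genuine gap at its center. After the leafwise step you only conclude that two distinct points $p,q$ with $F(p)=F(q)$ lie in distinct connected components of each $A_{ij}$, and you explicitly defer the derivation of a contradiction to an unspecified ``monodromy constraint'' on the covering $F\colon F^{-1}(\R^n\setminus S_F)\to \R^n\setminus S_F$. That deferred step is precisely the content of the theorem: $\codim S_F\geq 2$ only guarantees that $\R^n\setminus S_F$ is connected, not simply connected, so the covering argument cannot by itself force the sheet number to be $1$ (this is exactly why Jelonek's proof needs $\codim S_F\geq 3$), and the hypothesis that every leaf is a plane is what must be converted into the missing monodromy/injectivity statement. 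Moreover, part of the combinatorial information you plan to exploit is vacuous: by Lemma \ref{l:level}\ref{monotone}, $f_i$ is strictly monotone along every connected component of a fiber of $F_i$, so \emph{any} two distinct points of $F^{-1}(y)$ automatically lie in distinct components of $\bigcap_{l\neq i}f_l^{-1}(y_l)$, with no hypotheses at all; no contradiction can be extracted from that observation alone.

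Two further steps are also unjustified. First, the topological fact you invoke for surjectivity is false: $\R^3$ minus a closed ray is homeomorphic to $\R^3$ (the complement of a tame arc in $S^3$ is an open cell), so ``$\R^n$ is not homeomorphic to $\R^n$ minus a nonempty closed set of codimension $\geq 2$'' cannot close the injectivity-to-bijectivity step. This is not a cosmetic issue: as the introduction of the present paper recalls, the semialgebraic version of Jelonek's $\codim\geq 3$ theorem yields only injectivity, so surjectivity in the semialgebraic category is not a soft topological consequence of injectivity plus the codimension bound and needs a real argument (using the semialgebraic structure of $F(\R^n)$ and $S_F$, or the leaf hypothesis itself). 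Second, applying the planar semialgebraic result \cite[Proposition 2.13]{BDV_S} to $G=(f_i,f_j)|_{L_p}$ requires identifying $L_p$ with $\R^2$ \emph{semialgebraically}, whereas the hypothesis only provides a $C^2$ diffeomorphism $L_p\cong\R^2$; you would need either a Nash/semialgebraic trivialization of $L_p$ or a version of the planar theorem valid for local diffeomorphisms defined on a surface merely diffeomorphic to $\R^2$ with zero-dimensional nonproperness set, and neither is addressed. (The genericity you need for the $2$-plane $Q_{ij}$ to meet $S_F$ in dimension $\leq 0$ must also be reconciled with the constraint that $y$ is a double point; the perturbation you sketch is plausible since the double-point set is open, but it is entangled with the unfinished final step.)
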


\begin{proof}[Proof of Theorem \textnormal{\ref{t:main}}]
Assume that $F$ is a global diffeomorphism and let any $i\in \{1,\ldots, n\}$. 
For a given $\psi\in C^{\infty}(\R^n)$, let $\ell(x) = \int_0^{x_i} \psi\circ F^{-1}(z) J (F^{-1})(z) ds$, with $z = (x_1,\ldots, x_{i-1},s,x_{i+1},\ldots, x_n)$. 
Since for any $g \in C^{\infty}(\R^n)$ it holds
$$
\begin{aligned}
\partial_i(g\circ F^{-1})(x) & = J\left((f_1, \ldots, f_{i-1},g,f_{i+1}, \ldots, f_n)\circ F^{-1} \right)(x) \\
& = \Delta_i^F(g) (F^{-1}(x)) J(F^{-1})(x), 
\end{aligned}
$$ 
it follows that defining $g = \ell \circ F \in C^{\infty}(\R^n)$, we have $\Delta^F_i(g) = \psi$. 
This proves the surjectivity of $\Delta_i^F$. 

On the other hand, we now assume that $\Delta_i^F(C^{\infty}(\R^n)) = C^{\infty}(\R^n)$ for $n-1$ indices $i\in\{1,\ldots, n\}$ and let distinct $j ,k \in\{ 1, \ldots, n\}$. 
It follows by \ref{2t} of Theorem \ref{t:gs-geome} that any nonempty connected component of $\bigcap_{l\in \{1,\ldots,n\}\setminus \{i, j\}} f^{-1}_l(c_l)$ is  diffeomorphic to $\R^2$. 
Now the result follows by Theorem \ref{teo:FMS}.  
\end{proof} 

\section{A semialgebraic counterexample}

\begin{example}\label{semi}
The example we are going to construct here is strongly based on the construction presented in \cite{BS}. 
The idea is to consider $C^{\infty}$ semialgebraic functions in place of the $C^{\infty}$ ones appearing in that paper. 
The most difficult part is that we have to modify also the functions $g_h$ of \cite{BS}: this is a family of $C^{\infty}$ functions satisfying a list of properties. 
What happens is that properties (c) and (d) in this list implies that the function $g_h'''$ is flat at the point $1$ without being identically zero. 
This is impossible if $g_h$ is $C^{\infty}$ and semialgebraic, see for instance \cite[Proposition 2.9.5]{BCR}. 
Since the proofs in \cite{BS} use the specific $g_h$, when we modify them, we have to provide new proofs.  

Let $L>0$ and fix $A: \R\to (-L, L)$ and $E: \R \to (0,\infty)$ two $C^{\infty}$ semialgebraic diffeomorphisms. 
For instance, we can take $A(x) = L x/\sqrt{1+x^2}$ and $E(x) = x + \sqrt{1 + x^2}$. 
Further, for any $h > L$, we define the polynomial 
$$
g_h(z) = -\frac{h}{50} z (2z + 3)(2z - 3)(3z^2 + 7). 
$$
Then for $h_2>h_1> L$, we define the $C^{\infty}$ semialgebraic mapping $F: \R^3 \to \R^3$ by 
$$
F(x,y,z) = \left ((A(x) + g_{h_1}(z)) E(y), (A(x) + g_{h_2}(z)) E(y), (1-z^2) E(y) \right). 
$$
We claim that (i) $J(F)$ is nowhere zero, (ii) $F$ is not injective and (iii) $\Delta_1^F$ and $\Delta_2^F$ are surjective. 
Indeed, straightforward calculations give
$$
J(F)(x,y,z) = \frac{h_1-h_2}{50} E(y)^2E'(y) A'(x) \left(36 z^6-59 z^4 + 60 z^2 + 63 \right), 
$$
which is nowhere zero as the polynomial $m(z) = 36 z^6 - 59 z^4 + 60 z^2 + 63$ is positive (to see this, take $z^2 = t$ and observe that the resulting degree $3$ polynomial has derivative strictly positive, and so it has only one real zero, that must be negative as the leading and the constant coefficients of $m(z)$ are positive). 
Moreover, $F(x,y,3/2) = F(x,y,-3/2)$ for any $x,y \in \R$. 
So it remains to prove (iii). 
From Corollary \ref{corollary}, by defining
$$
f_h(x,y,z) = (A(x) + g_{h}(z)) E(y), 
$$ 
with $h>L$, it is enough to prove that for any connected component $L_i$ of $f_i^{-1}(c_i)$, $i = h,3$, it holds that $L_h\cap f_3^{-1}(c_3)$ and $L_3\cap f_h^{-1}(c_h)$ are connected sets. 

This is the most involving part. 
We begin by calculating the connected components of $f_h^{-1}(c)$ and of $f_3^{-1}(c)$ for any $c\in \R$: see the complete frame in Table \ref{ta}, where, similarly as in \cite{BS}, when we write, for instance, $A(x) = - g_h(z)$, $z < -1$, we mean the largest interval contained in $z < -1$ where this expression makes sense. 
{\footnotesize
	\begin{table}[h!]
		\begin{tabular}{|c|c|c|}
			\cline{2-3}
	\multicolumn{1}{c|}{}		& $f_h^{-1}(c)$ & $f_3^{-1}(c)$ \\
			\hline
			\multirow{4}{*}{$c<0$} & $2$ components & $2$ components: \\
			& $A(x) = c E^{-1}(y) - g_h(z)$, $z < 1$ & $E^{-1}(y) = (1-z^2)/c$, $z<-1$ \\ 
			& & \\
			& $A(x) = c E^{-1}(y) - g_h(z)$, $z > 1$ & $E^{-1}(y) = (1-z^2)/c$, $z > 1$ \\
			\hline
			\multirow{6}{*}{$c = 0$} & $3$ components: & $2$ components: \\
			& $A(x) = -g_h(z)$, $z< -1$ & \\
			& & $z = -1$ \\
			& $A(x) = - g_h(z)$, $-1 < z < 1$ & \\
			& & $z = 1$ \\
			& $A(x) = -g_h(z)$, $z > 1$ & \\ 
			\hline 
			\multirow{4}{*}{$c > 0$} & $2$ components: & $1$ component: \\
			& $A(x) = c E^{-1}(y) - g_h(z)$, $z < -1$ & \\
			& & $E^{-1}(y) = (1-z^2)/c$, $-1<z<1$ \\
			&  $A(x) = c E^{-1}(y) - g_h(z)$, $z > -1$ & \\
			\hline 
		\end{tabular}
	\caption{Connected components of fibers}\label{ta}
	\end{table}
}

Clearly the intersection of each of the two connected components of $f_3^{-1}(0)$ with the fibers of $f_h$ are connected or empty (use the fact that the function $y \to c E^{-1}(y) + cte$ is injective). 
Analogously, since $z=-1$ and $z=1$ do not occur both in the same connected component of $f_h^{-1}(c)$, it follows that $f_3^{-1}(0)$ intersected with any of them is also connected. 

Each one of the other required intersections is a solution of the system of equations 
$$
E^{-1}(y) = (1-z^2)/c_3,\ \ \ \ A(x) = c_h E^{-1}(y) - g_h(z), 
$$ 
for $z$ in \emph{only one} of the intervals 
$$
I_1 = (-\infty, -1), \ \ \ \ I_2 = (-1,1),\ \ \ \ I_3 = (1,\infty).
$$ 
They are all connected by 
\begin{lemma}
	Let $\alpha\in \R$ and let $k_{\alpha}: \R \to \R$ be defined by $k_{\alpha} (z) = \alpha (1-z^2) - g_h(z)$. 
	Then the sets $k_{\alpha}^{-1} (-L, L)\cap I_i$, $i = 1,2,3$ are connected. 
	In particular, for each $c_3 \neq 0$, the solution of the system $E^{-1}(y) = (1 - z^2)/c_3$, $A(x) = k_{\alpha}(z)$, $z\in I_i$, form a connected set of $\R^3$ for $i=1,2,3$. 
\end{lemma}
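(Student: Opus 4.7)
The plan is to show that on each $I_i$ the function $k_\alpha$ has at most one critical point, and that, when this critical point exists, its extremal value---together with one adjacent monotone branch of $k_\alpha$---lies entirely outside $(-L,L)$. The preimage $k_\alpha^{-1}(-L,L)\cap I_i$ then reduces to a portion of a single monotone branch, hence is an interval.

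A direct differentiation together with the factorization $60z^4+3z^2-63=(z^2-1)(60z^2+63)$ gives
$$
k_\alpha'(z) = -2\alpha z + \frac{h}{50}(z^2-1)(60z^2+63).
$$
For $z\neq 0$ the equation $k_\alpha'(z)=0$ is equivalent to $\phi(z)=100\alpha/h$, where $\phi(z):=60z^3+3z-63/z$. Since $\phi'(z)=180z^2+3+63/z^2>0$ and $\phi(\pm 1)=0$, the function $\phi$ is strictly increasing on each of $(-\infty,0)$ and $(0,\infty)$, so $\phi(z)=100\alpha/h$ has exactly one root on each half-line. Whether such a root lies in a given $I_i$ is controlled by the sign of $\alpha$; in particular, each $I_i$ contains at most one critical point of $k_\alpha$. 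I will also use the boundary values $k_\alpha(\pm 1)=\mp h$ and the asymptotics $k_\alpha(z)\to\pm\infty$ as $z\to\pm\infty$ (the degree-$5$ term in $-g_h$ dominates).

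The decisive step is: whenever $I_i$ contains a critical point $z^*$, the extremal value is bounded away from $[-L,L]$ thanks to $h>L$. For example, on $I_1$ the critical point $z_-$ exists iff $\alpha<0$ and, from the sign of $k_\alpha'$, is a local maximum; since $k_\alpha$ decreases from $k_\alpha(z_-)$ to $k_\alpha(-1^-)=h$ on $(z_-,-1)$, we must have $k_\alpha(z_-)>h>L$. Hence the whole branch $(z_-,-1)$ stays above $L$ and contributes nothing to $k_\alpha^{-1}(-L,L)\cap I_1$; the preimage reduces to its intersection with $(-\infty,z_-]$, where $k_\alpha$ is monotone and crosses $(-L,L)$ along a single interval. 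The same idea handles $I_2$ with $\alpha>0$ (the unique critical point lies in $(-1,0)$, is a local maximum, and $k_\alpha\geq h$ on $(-1,z_-]$); the remaining configurations ($\alpha\geq 0$ on $I_1$, both signs on $I_3$, and $\alpha<0$ on $I_2$) follow either from direct monotonicity of $k_\alpha$ on $I_i$ or from the symmetry $k_\alpha(z)=-k_{-\alpha}(-z)$, which swaps $I_1$ with $I_3$ and preserves $I_2$. I expect the only subtle part to be the sign bookkeeping (of $\alpha$, of $z$, and of the factors of $k_\alpha'$), which must be made explicit case by case.

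Finally, the ``in particular'' assertion follows at once, since the continuous map
$$
z\longmapsto\bigl(A^{-1}(k_\alpha(z)),\; E\bigl((1-z^2)/c_3\bigr),\; z\bigr),
$$
defined precisely on the set $k_\alpha^{-1}(-L,L)\cap I_i$ of $z\in I_i$ for which $k_\alpha(z)$ lies in the range $(-L,L)$ of $A$, parametrizes the solution set of the system in $\R^3$; this solution set is therefore connected as the continuous image of a connected set.
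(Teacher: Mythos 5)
Your argument is correct and follows essentially the same route as the paper: both proofs show that $k_\alpha$ has exactly one negative local maximum and one positive local minimum (the paper via $k_\alpha'''>0$ together with $k_\alpha'(0)<0$, you via the strictly increasing auxiliary function $\phi$ with $\phi(\pm 1)=0$, which in addition locates the critical points relative to $\pm 1$ according to the sign of $\alpha$), and then both use $k_\alpha(\pm 1)=\mp h$ with $h>L$ to discard the branch of $I_i$ adjacent to $\pm 1$, leaving a single monotone branch. The only caveat concerns the ``in particular'' part (which the paper leaves implicit): in the paper's notation $E^{-1}(y)$ denotes the reciprocal $1/E(y)$ (this is what produces the restrictions $z<-1$, $|z|<1$, etc.\ in Table 1), so the $y$-coordinate of your parametrization should be the inverse diffeomorphism of $E$ evaluated at $c_3/(1-z^2)$, defined only when $c_3(1-z^2)>0$ (otherwise the solution set over $I_i$ is empty); this does not affect your conclusion, since the solution set is still the graph over $z$, hence a continuous image, of the connected set $k_{\alpha}^{-1}(-L,L)\cap I_i$.
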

\begin{proof} 
The polynomial function 
$$
k_{\alpha}(z) = \alpha(1-z^2) - g_h(z) = \frac{6 h}{25} z^5 + \frac{h}{50} z^3 - \frac{63 h}{50} z + \alpha (1 - z^2)
$$ 
is such that $k_\alpha'''(z)=\frac{72h}{5}z^2+\frac{3h}{25}>0$, $\forall z$, implying that $k_\alpha''(z)$ is strictly increasing. 
Since $k_{\alpha}''$ is a polynomial of degree $3$, it has exactly one real root, say $q_0$. 
Therefore, $k_\alpha'(z)=\frac{6h}{5}z^4+\frac{3h}{50}z^2-2\alpha z-\frac{63h}{50}$ is strictly decreasing in $(-\infty,q_0)$ and strictly increasing in $(q_0,\infty)$. 
So, since this is a polynomial of degree $4$ with positive leader coefficient and $k_\alpha'(0)=-\frac{63h}{50}<0$, it follows that $k_\alpha'(z)$ has exactly two simple zeros, being one negative, say $p^-$, and the other one  positive, say $p^+$. 
As a consequence, $k_{\alpha}(z)$ has a local maximum at $p^-$ and a local minimum at $p^+$, being strictly increasing in $(-\infty, p^-)\cup (p^+, \infty)$ and strictly decreasing in $(p^-, p^+)$. 
Therefore, since $h > L$ and $k_{\alpha}(\pm 1) = \mp h$, it follows that $k_{\alpha}^{-1}(-L, L)\cap I_i$, $i=1,2,3$, are connected sets. 
\end{proof}
\end{example}

\section{Fibrations and geometric properties of the operators $\Delta_i^F$}\label{s:3}

Let $M$ and $N$ be topological spaces and $f: M\to N$ be a continuous mapping. 
We recall that $f$ is said to be a \emph{local fibration at} $t_0\in N$ if there exist a neighborhood $V$ of $t_0$ and a homeomorphism $h:f^{-1} (t_0)\times V\to f^{-1}\left(V\right)$ such that $f\circ h= \pi_2$, where $\pi_2:f^{-1} (t_0)\times V \to V$ is the canonical projection. 
The set of points of $N$ where $f$ is not a local fibration is called the \emph{bifurcation set of $f$} and it is denoted by $B(f)$. 
We say that $f$ is a \emph{local fibration} if $B(f) = \emptyset$. 
We further say that $f$ is a \emph{local fibration on its range} if $B(f)\cap f(M) = \emptyset$. 
Finally, when it is possible to take $V = N$ for some $t_0\in N$ above, we say that $f$ is a \emph{trivial fibration}. 

Next result provides a relation between the local fibration concept and the surjectivity of the operators $\Delta^F_i$ associated to a given $C^{\infty}$ local diffeomorphism $F:\mathbb{R}^n \to \mathbb{R}^n$: 

\begin{proposition}\label{fibra}
Let $F:\mathbb{R}^n \to \mathbb{R}^n$ be a $C^\infty$ mapping with nowhere vanishing $J(F)$ and $i\in\{1,\ldots,n\}$. 
If $F_i$ is a local fibration on its range then $\Delta_{i}^F \left(C^{\infty}(\R^n)\right) = C^{\infty}(\R^n)$.  
\end{proposition}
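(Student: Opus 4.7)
The plan is to verify the two conditions of Lemma \ref{hj} for the vector field $\Delta_i^F$ on $\R^n$. By \ref{conn} of Lemma \ref{l:level}, the integral curves of $\Delta_i^F$ are exactly the connected components of the fibers of $F_i$, and by \ref{homeo} each such component is diffeomorphic to $\R$ with both ends unbounded; this immediately yields condition (i) of Lemma \ref{hj}, since no such curve can be contained in a compact subset of $\R^n$.

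For condition (ii), fix a compact $K \subset \R^n$ and let $A = F_i(K) \subset F_i(\R^n)$. The strategy is to exploit the local fibration hypothesis to cover $A$ with finitely many trivialization neighborhoods and to paste together local compact sets into a global one. For each $t \in A$, use the local fibration property to obtain an open $V_t$ and a homeomorphism $h_t: F_i^{-1}(t) \times V_t \to F_i^{-1}(V_t)$ satisfying $F_i \circ h_t = \pi_2$; shrink to $U_t$ with compact $\overline{U_t} \subset V_t$, and by compactness of $A$ extract a finite subcover $U_{t_1}, \ldots, U_{t_k}$. Set $\tilde K_j := K \cap F_i^{-1}(\overline{U_{t_j}})$, a compact set, and project $h_{t_j}^{-1}(\tilde K_j) \subset F_i^{-1}(t_j) \times \overline{U_{t_j}}$ onto the first factor to get a compact $D_j \subset F_i^{-1}(t_j)$. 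A flow-box argument using the nonvanishing of $\Delta_i^F$ shows that each fiber meets a local product neighborhood in a single arc, so $D_j$ intersects only finitely many connected components $C_{j,1}, \ldots, C_{j,m_j}$ of $F_i^{-1}(t_j)$; in each $C_{j,\ell} \cong \R$, the compact set $D_j \cap C_{j,\ell}$ sits in a compact interval $I_{j,\ell}$. Define
\[
K' := \bigcup_{j=1}^k h_{t_j}\bigl(\hat D_j \times \overline{U_{t_j}}\bigr), \qquad \hat D_j := \bigcup_{\ell=1}^{m_j} I_{j,\ell},
\]
which is compact in $\R^n$.

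Then for a compact arc $\gamma$ of an integral curve with endpoints $p, q \in K$, both endpoints share the same $F_i$-value $t \in \overline{U_{t_j}}$ for some $j$, so $p, q \in \tilde K_j$. Since $h_{t_j}$ is a homeomorphism that preserves fibers and hence their component structure, the integral curve through $p$ and $q$ equals $h_{t_j}(C_{j,\ell} \times \{t\})$ for some $\ell$, and the sub-arc from $p$ to $q$ corresponds to the interval between the first-factor projections of $h_{t_j}^{-1}(p)$ and $h_{t_j}^{-1}(q)$ in $C_{j,\ell} \cong \R$, which is contained in $I_{j,\ell}$. Thus $\gamma \subset K'$, verifying condition (ii). The main obstacle I anticipate is the geometric bookkeeping isolating $\hat D_j$ as compact: one must justify that $D_j$ hits only finitely many components of $F_i^{-1}(t_j)$ (via flow-box neighborhoods of $\Delta_i^F$) and that the trivialization $h_{t_j}$ really does transport arcs in components of $F_i^{-1}(t_j)$ to arcs in the corresponding components of $F_i^{-1}(t)$.
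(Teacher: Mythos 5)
Your proof is correct, and it runs in the opposite logical direction from the paper's. You verify the two conditions of Lemma \ref{hj} directly: condition (i) from \ref{homeo} and \ref{conn} of Lemma \ref{l:level}, and condition (ii) by covering $F_i(K)$ with finitely many trivializing neighborhoods $\overline{U_{t_j}}\subset V_{t_j}$ and assembling an explicit compact $K'$ out of products of compact intervals in finitely many fiber components with the $\overline{U_{t_j}}$; the key facts you use (the trivialization $h_{t_j}$ carries components of $F_i^{-1}(t_j)$ homeomorphically onto components of $F_i^{-1}(t)$ for every $t\in V_{t_j}$, and the sub-arc of an integral curve between two points corresponds to the interval between their first-factor projections, which stays in $I_{j,\ell}$ because $I_{j,\ell}$ is an interval) are all sound. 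The one step you flag as delicate, that $D_j$ meets only finitely many components of $F_i^{-1}(t_j)$, needs no flow-box bookkeeping: the fiber is a $1$-dimensional submanifold, hence locally connected, so its components are open in the fiber and compactness of $D_j$ gives finiteness at once. The paper instead argues by contradiction: assuming $\Delta_i^F$ is not surjective, the unboundedness of integral curves plus Lemma \ref{hj} yields sequences $x_k\to p_1$, $\gamma_{x_k}(t_k)\to p_2$, $|\gamma_{x_k}(s_k)|\to\infty$, which force $p_1,p_2$ to lie on distinct components of a single fiber $F_i^{-1}(c)$; a single trivialization at $c$ then separates the open sets $h(\gamma_{p_1}\times V)$ and $h(\gamma_{p_2}\times V)$, while the connected curves $\gamma_{x_k}$ (which stay in $F_i^{-1}(V)$) would have to meet both, a contradiction. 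Your direct construction is longer but more quantitative, producing $K'$ explicitly and avoiding the sequence extraction; the paper's argument is shorter and needs only one trivializing chart, at the price of being purely qualitative.
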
 
\begin{proof} 
We denote by $\gamma_x$ the integral curve of $\Delta_{i}^F$ passing through $x$. 	
We suppose on the contrary that $\Delta_{i}^F\left(C^{\infty}(\R^n)\right)\neq C^{\infty}(\R^n)$. 
Since from \ref{homeo} and \ref{conn} of Lemma \ref{l:level} the integral curves of $\Delta_{i}^F$  are unbounded, it follows from Lemma \ref{hj} that there exist $p_1, p_2\in \R^n$ and sequences $\{x_k\}\subset \R^n$ and $\{s_k\}, \{t_k\}\subset \R$, with $0< s_k<t_k$, such that 
\begin{equation}\label{56g}
x_k\to p_1,\ \gamma_{x_k}(t_k)\to p_2 \textnormal{ and } |\gamma_{x_k} (s_k)|\to \infty. 
\end{equation}
By using the flow box theorem together with \eqref{56g} (consider also \ref{monotone} of Lemma \ref{l:level}) it follows that $p_1$ and $p_2$ are in distinct integral curves of $\Delta_{i}^F$, say $\gamma_{p_1}$ and $\gamma_{p_2}$, respectively. 
Setting $c = F_{i} (p_1)$ it follows by continuity and by Lemma \ref{l:level} that $\gamma_{p_1}$ and $\gamma_{p_2}$ are distinct connected components of $F_{i}^{-1}(c)$. 
	
Since $F_i$ is a local fibration at $c$, there exist an open neighborhood $V$ of $c$ and a homeomorphism $h: F_{i}^{-1} (c) \times V \to F_{i}^{-1} (V)$ such that $F_i\circ h = \pi_2$. 
The open sets $h\left(\gamma_{p_1}\times V\right)$ and $h\left(\gamma_{p_2}\times V\right)$ do not intersect each other. 
This is a contradiction with \eqref{56g}, because for $k$ big enough the curve $\gamma_{x_k}$ connects these two sets but will also be contained in one of them. 
\end{proof} 

The converse of Proposition \ref{fibra} is not true. 
Let, for instance, the polynomial mapping $F(x,y,z) = \left(P(x,y), z\right)$, where $P: \R^2 \to \R^2$ is the specific above-mentioned Pinchuk mapping considered in \cite{Ca}. 
The operator $\Delta_3^F = J(P)\partial_3$ is clearly surjective, but $F_3$ can not be a fibration at its range because there are points in $P(\R^2)$ with different number of pre-images, according to \cite{Ca}. 
 
Now let $F:\mathbb{K}^n \to \mathbb{K}^n$ be a polynomial mapping for $\mathbb{K}=\C$ (resp. a rational mapping for $\mathbb{K}=\R$) with nowhere vanishing $J(F)$. 
It is proved in \cite{BL} for $\mathbb{K}=\C$ (resp. in  \cite{Camp} for $\mathbb{K}=\R$) that if $F$ is proper when restricted to its image, then $F$ is an invertible mapping. 
I.e., if $S_F\cap F(\mathbb{K}^n) = \emptyset$ then $F$ is invertible. 
Bearing in mind that in this case $B(F) = S_F$ and motivated by these results and by the subject of our paper, we state the following: 
\begin{theorem}\label{p:image} 
Let $F : \R^n \to \R^n$ be a semialgebraic local homeomorphism. 
If either $B(F) \cap F(\R^n) = \emptyset$ or $B(F_i) \cap F_i(\R^n) = \emptyset$ for some $i \in \{1,\ldots n\}$, then $F$ is an injective mapping. 
\end{theorem}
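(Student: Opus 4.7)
My plan is to treat both cases with a single topological strategy: use the local-fibration hypothesis to realise $F$ or $F_i$ as a locally trivial fibration over its image, extract finiteness from semialgebraicity, and then exploit simple connectedness of $\R^n$ together with a cohomological-dimension obstruction to conclude that the resulting $\pi_1$ of the base is trivial.

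For Case 1, assuming $B(F)\cap F(\R^n)=\emptyset$, set $U=F(\R^n)$, which is an open, connected, semialgebraic subset of $\R^n$. Each fibre $F^{-1}(y)$ with $y\in U$ is discrete (as $F$ is a local homeomorphism) and semialgebraic, hence finite. The local fibration hypothesis at $y\in U$ gives a neighbourhood $V$ of $y$ over which $F^{-1}(V)$ is a finite disjoint union of open sets each mapped homeomorphically onto $V$. Thus $F\colon\R^n\to U$ is a covering map of some finite degree $d$ (constant on $U$ by connectedness). Simple connectedness of $\R^n$ makes $F$ the universal cover of $U$, giving $|\pi_1(U)|=d$ and making $U$ aspherical.

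For Case 2, assuming $B(F_i)\cap F_i(\R^n)=\emptyset$ for some $i$, set $U_i=F_i(\R^n)$, open in $\R^{n-1}$ because $F_i$ is the composition of the open map $F$ with a coordinate projection. By \ref{homeo} of Lemma \ref{l:level} each fibre of $F_i$ is a disjoint union of $k$ copies of $\R$, with $k$ locally constant by local triviality and therefore constant on $U_i$. The long exact homotopy sequence of the fibration $F_i\colon\R^n\to U_i$, with fibre $\bigsqcup_{j=1}^{k}\R$ and contractible total space, then yields $\pi_m(U_i)=0$ for all $m\geq 2$ (so $U_i$ is aspherical) and a bijection $\pi_1(U_i)\to\pi_0(\mathrm{fibre})$, forcing $|\pi_1(U_i)|=k$.

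To finish, I invoke a cohomological-dimension argument: $U$ (resp.\ $U_i$) is a connected non-compact manifold of dimension $n$ (resp.\ $n-1$), hence is homotopy equivalent to a CW complex of dimension strictly less than its own, so the group cohomology of its fundamental group vanishes in sufficiently high degree. Since any finite nontrivial group has infinite cohomological dimension (for instance $H^{*}(\mathbb{Z}/p;\mathbb{F}_p)$ is nonzero in every degree), asphericity together with finiteness of $\pi_1$ forces $\pi_1=0$. This gives $d=1$ in Case 1, so $F$ is injective; and $k=1$ in Case 2, so each fibre of $F_i$ is a single copy of $\R$, along which $f_i$ is strictly monotone by \ref{monotone} of Lemma \ref{l:level}, making $F=(F_i,f_i)$ injective on each fibre of $F_i$ and therefore globally. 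The main obstacles I expect are (i) promoting the ``local fibration on the range'' condition to a genuine Hurewicz fibration so that the long exact sequence is available (which should follow from local triviality and paracompactness), and (ii) stating precisely the CW-dimension bound for the non-compact image; both are standard but require careful bookkeeping.
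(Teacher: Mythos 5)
Your proof is correct, but it reaches the key conclusion ($d=1$, resp.\ $k=1$) by a genuinely different route than the paper. The paper applies multiplicativity of the Euler characteristic for local fibrations (its Lemma \ref{prop:Di}): since $\R^n$ is contractible and the fibre is a finite disjoint union of points (resp.\ of copies of $\R$), it gets $1=\chi(F(\R^n))\,d$ (resp.\ $1=\chi(F_i(\R^n))\,d$), and integrality of $\chi$ forces $d=1$; semialgebraicity is used there to make both the image and the fibre homotopy equivalent to finite CW-complexes. You instead run covering-space theory (Case 1) and the homotopy exact sequence of the fibration (Case 2) to identify $d$, resp.\ $k$, with the order of $\pi_1$ of the image, and then kill $\pi_1$ by the Smith-type obstruction that a nontrivial finite group cannot be the fundamental group of a finite-dimensional aspherical space (nonvanishing of $H^*(\Z/p;\mathbb{F}_p)$ in all degrees, plus monotonicity of cohomological dimension under passage to subgroups or to the corresponding cover). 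Your route uses semialgebraicity only to guarantee finitely many fibre components --- in Case 2 you should state this explicitly, as you did in Case 1, since the exact sequence argument needs $k<\infty$ --- and otherwise relies on the image being an \emph{open} subset of Euclidean space, whereas the paper does not need openness of the image but does need its semialgebraicity for the finite CW structure. Your two flagged technical points are indeed harmless: local triviality yields at least a Serre fibration, which is all the exact sequence requires, and the crude bound that an open subset of $\R^m$ carries no cohomology above degree $m$ already suffices, so the sharper ``non-compact manifolds have CW dimension one less'' statement is not needed. Both proofs finish identically, using \ref{monotone} of Lemma \ref{l:level} to pass from connectedness of the fibres of $F_i$ to injectivity of $F$.
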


Before proving it, we recall a classical equation involving Euler characteristics, see for example \cite[Corollary 2.5.5]{Di}: 
\begin{lemma}\label{prop:Di}
Let $M$ and $N$ be topological spaces. 
Let $f\colon M\to N$ be a local fibration such that $N$ and $R=f^{-1}(t)$, for some $t\in N$, are homotopy equivalent to finite CW-complexes. 
Then the three Euler characteristics $\chi(M)$, $\chi(N)$ and $\chi(R)$ are defined and related as follows 
$$
\chi(M)=\chi(N)\chi(R). 
$$	
\end{lemma}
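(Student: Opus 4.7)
The plan is to deduce the product formula from the Leray-Serre spectral sequence of the locally trivial fibration $f\colon M\to N$. Since Euler characteristics are homotopy invariants, I may replace $N$ and $R$ by finite CW models when computing $\chi(N)$, $\chi(R)$, and $\chi(M)$; note that $M$ itself inherits the homotopy type of a finite CW-complex because over each cell of $N$ the fibration is trivializable and $R$ is finite CW.

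Next, I would invoke the cohomological Leray-Serre spectral sequence with coefficients in a field,
\[
E_2^{p,q}=H^p\bigl(N;\mathcal{H}^q\bigr)\ \Longrightarrow\ H^{p+q}(M),
\]
where $\mathcal{H}^q$ is the local system on $N$ whose stalk at $t\in N$ is $H^q(f^{-1}(t))$. Under our finiteness hypotheses each $E_2^{p,q}$ is a finite-dimensional vector space that vanishes outside a bounded range of bidegrees. The key algebraic input is the identity
\[
\sum_{p}(-1)^p\dim H^p(N;\mathcal{H}^q)=\chi(N)\cdot\dim H^q(R),
\]
valid for any finitely generated local system on a finite CW-complex: a cellular cochain complex with twisted coefficients has, in each degree, the same number of generators as in the untwisted case, so only the ranks enter the alternating-sum count. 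Summing with signs $(-1)^q$ yields $\chi(E_2)=\chi(N)\chi(R)$.

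Because each differential $d_r$ is a chain map whose cohomology is $E_{r+1}$, one has $\chi(E_r)=\chi(E_{r+1})$ for every $r\geq 2$; the sequence stabilizes after finitely many pages, so $\chi(E_\infty)=\chi(E_2)=\chi(N)\chi(R)$. Convergence of the spectral sequence provides a finite filtration of $H^{\ast}(M)$ whose associated graded is $E_\infty$, and hence $\chi(M)=\chi(E_\infty)=\chi(N)\chi(R)$, as required.

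The main obstacle is precisely the local-coefficients Euler characteristic calculation: one must invoke the fact that $\chi(N;\mathcal L)$ depends only on $\chi(N)$ and the rank of $\mathcal L$, so monodromy has no effect on the alternating sum. An alternative route, avoiding local systems altogether, would be an induction on the cells of $N$ using Mayer-Vietoris: over each cell, being contractible, the fibration trivializes, and one glues the pieces together inductively. This approach is conceptually cleaner but involves more bookkeeping than the spectral sequence argument sketched above.
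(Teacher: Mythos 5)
The paper does not actually prove this lemma: it is quoted verbatim as a classical fact with a pointer to \cite[Corollary 2.5.5]{Di}, so there is no internal argument to compare against. Your Leray--Serre spectral sequence proof is the standard proof of that classical fact and is correct in substance: local triviality gives a Serre fibration, $E_2^{p,q}=H^p(N;\mathcal{H}^q)$ is bounded and finite dimensional over a field, the twisted Euler characteristic identity $\chi(N;\mathcal{L})=\chi(N)\cdot\mathrm{rank}\,\mathcal{L}$ holds by the cell-counting argument you give (transported along a homotopy equivalence to a finite CW model), $\chi$ is preserved page by page, and the finite filtration of $H^*(M)$ closes the argument. Two small remarks. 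First, your opening claim that $M$ has the homotopy type of a finite CW-complex is both imprecise ($N$ is only homotopy equivalent to a finite complex, so it has no literal cells to trivialize over) and unnecessary: finite dimensionality of $H^*(M;k)$, which is all you need for $\chi(M)$ to be defined, falls out of the spectral sequence itself once $E_2$ is finite dimensional. Second, your identification of the stalk of $\mathcal{H}^q$ with $H^q(R)$ at every point tacitly assumes $N$ is path-connected; this assumption is in fact needed for the statement itself (a disconnected base with different fibers over different components violates the formula), and it is harmless here because in the paper's application the base is $F(\R^n)$ or $F_i(\R^n)$, which is connected, but it is worth stating explicitly.
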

A semialgebraic set is homeomorphic to a simplicial complex (which is a CW-complex), see for instance \cite{BCR}. 
So, since the image of a semialgebraic set by a semialgebraic mapping is again a semialgebraic set, we can apply the above formula and deliver the 

\begin{proof}[Proof of Theorem \textnormal{\ref{p:image}}]
For any $y \in F_i(\R^n)$ (resp. $y\in F(\R^n)$), the $d$ connected components of $F^{-1}_i(y)$ (resp. $F^{-1}(y)$) are homeomorphic to $\R$ by \ref{homeo} of Lemma \ref{l:level} (resp. are points). 
By Proposition \ref{prop:Di},
\[
1 = \chi(F_i(\R^n)) d\ \ \ \ (\textnormal{resp. } 1 = \chi(F(\R^n))d ).
\]
Since $\chi(F_i(\R^n)) \in \Z$ (resp. $\chi(F(\R^n)) \in \Z$), it follows that $d=1$. 
This provides the injectivity of $F$ since the $i$-th component of $F$ is monotone along each nonempty fiber of $F_i$ by \ref{monotone} of Lemma \ref{l:level} (resp. trivially). 
\end{proof}

The next result follows direct by Proposition \ref{p:image} since  an injective polynomial $f:\R^n \to \R^n$ is bijective, see \cite{BR}.  

\begin{corollary}  
Let $F=(f_1, \ldots, f_n):\R^n \to \R^n$ be a polynomial mapping with nowhere vanishing $J(F)$. 
If either $B(F) \cap F(\R^n) = \emptyset$ or $B(F_i) \cap F_i(\R^n) = \emptyset$ for some $i \in \{1,\ldots n\}$, then $F$ is invertible. 
\end{corollary}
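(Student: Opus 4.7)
The plan is quite short because the statement is a direct corollary of Theorem \ref{p:image} combined with a classical injectivity-to-bijectivity result for polynomial self-maps of $\R^n$. First I would observe that any polynomial map $F\colon\R^n\to\R^n$ is automatically semialgebraic, and the hypothesis that $J(F)$ is nowhere vanishing makes $F$ a local diffeomorphism, so in particular a local homeomorphism. Consequently the full hypotheses of Theorem \ref{p:image} are met, and under either of the two alternative assumptions on the bifurcation sets, Theorem \ref{p:image} yields that $F$ is injective.

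Second, to upgrade injectivity to invertibility, I would invoke the classical theorem of Bia{\l}ynicki-Birula and Rosenlicht \cite{BR}: every injective polynomial map $\R^n\to\R^n$ is automatically surjective, and hence bijective. Applying this result to our injective $F$ gives the conclusion.

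There is essentially no obstacle: both ingredients are on hand, and the proof reduces to a one-line composition, exactly as the short remark preceding the corollary indicates. The only subtlety worth flagging is the reading of the word \emph{invertible}; in the present context it plainly means bijective as a self-map of $\R^n$, not the existence of a polynomial inverse (which would be a Jacobian-type conclusion, much stronger than what is claimed and certainly beyond what Theorem \ref{p:image} together with \cite{BR} delivers). With that understanding, no further work is required.
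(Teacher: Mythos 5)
Your proof is correct and is exactly the paper's argument: the corollary follows by applying Theorem \ref{p:image} (a polynomial map with nowhere vanishing $J(F)$ is a semialgebraic local homeomorphism) to get injectivity, and then the theorem of Bia\l ynicki-Birula and Rosenlicht \cite{BR} to upgrade injectivity to bijectivity. Nothing further is needed.
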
  

In the light of the preceding two results, we finish the paper with an open problem. 
If we do not assume that $F$ is semialgebraic, then Proposition \ref{p:image} may be not true, as the non-injective local diffeormorphism $F(x,y,z) = \left(e^x \cos y, e^x \sin y, z \right)$ illustrates: here $F_3$ is a local fibration on its range. 
It is then natural to ask: 
\begin{quote} 
Is it true that a given $C^{k}$ (resp. $C^0$) mapping $F=(f_1, \ldots, f_n):\R^n \to \R^n$, with nowhere zero $J(F)$ (resp. locally homeomorphism), such that $F_{i}$ is a local fibration on its range for $n-1$ indices $i\in \{1,\ldots, n\}$ is necessarily injective? 
\end{quote}
Since $F_i$ being a locally fibration on its range implies the surjectivity of $\Delta_i^F$ (in the $C^{\infty}$ case), this would also provide an alternative to the false claim recalled in the introduction section ``If $\Delta_i^F(C_n^{\infty}) = C_n^{\infty}$ for $n-1$ indices $i\in\{1,\ldots, n\}$, then $F:\R^n \to \R^n$, a $C^{\infty}$ mapping with nowhere vanishing $J(F)$, is injective.''


\begin{thebibliography}{10} 
\bibitem{BR}
{\sc A. Bia\l ynicki-Birula and M. Rosenlicht},
\emph{Injective morphisms of real algebraic varieties},  
{Proc. Amer. Math. Soc.} {\bf 13}, (1962),  {200--203}.  

\bibitem{BCR} 
{\sc J. Bochnak, M. Coste and M-F. Roy}, 
Real algebraic geometry. 
Ergebnisse der Mathematik und ihrer Grenzgebiete (3), \textbf{36}. Springer-Verlag, Berlin, 1998. x+430 pp. 

\bibitem{BDV_S}
{\sc F. Braun, L.R.G. Dias and J. Venato-Santos}, 
\emph{On global invertibility of semialgebraic local diffeomorphisms}, 
To appear in Topol. Methods Nonlinear Anal. 

\bibitem{BS}
{\sc F. Braun and J.R. dos Santos Filho}, 
\emph{A correction to the paper ``Injective mappings and solvable vector fields of Euclidean spaces''}, 
Topol. Appl. \textbf{204} (2016), 256--265. 

\bibitem{BST}
{\sc F. Braun, J.R. dos Santos Filho and M.A. Teixeira}, 
\emph{Foliations, solvability and global injectivity}, 
arXiv:1603.07543v1 [math.DS].

\bibitem{BL}
{\sc C. Byrnes and A. Lindquist},
\emph{A note on the {J}acobian conjecture}, 
Proc. Amer. Math. Soc. {\bf 136},  (2008),  {3007--3011}.  

\bibitem{Camacho}
{\sc C. Camacho and A. Lins Neto}, 
Geometric theory of foliations. 
Birkh\"auser Boston, Inc., Boston, MA, 1985. vi+205 pp. 

\bibitem{Ca}
{\sc L.A. Campbell}, 
\emph{The asymptotic variety of a Pinchuk map as a polynomial curve}, 
Appl. Math. Lett. \textbf{24} (2011), 62--65. 

\bibitem{Camp}
{\sc L.A. Campbell},
\emph{On the rational real Jacobian conjecture},
Univ. Iagel. Acta Math. {\bf 51} (2013), 7--15.  

\bibitem{Costa}
{\sc A.F. Costa, F.G. Gascon and A. Gonz\'alez-L\'opez}, 
\emph{On codimension one submersions of Euclidean spaces}, 
Invent. Math. \textbf{93} (1988), 545--555.
	
\bibitem{Di} 
{\sc A.  Dimca}, 
Sheaves in topology. 
Universitext. Springer-Verlag, Berlin, 2004. xvi+236 pp. 

\bibitem{DH}
{\sc J.J. Duistermaat and L. H\"ormander}, 
\emph{Fourier integral operators. II}, 
Acta Math. \textbf{128} (1972), 183--269.

\bibitem{FGR}
{\sc A. Fernandes, C. Gutierrez and R. Rabanal}, 
\emph{Global asymptotic stability for differentiable vector fields of $\mathbb{R}^2$},
J. Differential Equations {\bf 206} (2004), 470--482.

\bibitem{FMS}
{\sc A. Fernandes, C. Maquera and J. Venato-Santos,}
\emph{Jacobian conjecture and semi-algebraic maps}, 
Math. Proc. Cambridge Philos. Soc. {\bf 157} (2014), 221--229.

\bibitem{J1}
{\sc Z. Jelonek}, 
\emph{Testing sets for properness of polynomial mappings}, 
Math. Ann. \textbf{315} (1999), 1--35. 

\bibitem{J}
{\sc Z. Jelonek}, 
\emph{Geometry of real polynomial mappings},
Math. Z. {\bf 239} (2002), 321-333.

\bibitem{KS}
{\sc T. Krasi\'nski and S. Spodzieja}, 
\emph{On linear differential operators related to the $n$--dimensional Jacobian conjecture},
Real algebraic geometry (Rennes, 1991), 308--315, Lecture Notes in Math. \textbf{1524}, Springer, Berlin, 1992. 

\bibitem{P}
{\sc S. Pinchuk}, 
\emph{A counterexample to the strong real Jacobian conjecture},
Math. Z. {\bf 217} (1994), 1--4.

\bibitem{St1} 
{\sc Y. Stein}, 
\emph{On linear differential operators related to the Jacobian conjecture}, 
J. Pure Appl. Algebra \textbf{57} (1989), 175--186.

\bibitem{St2} 
{\sc Y. Stein}, 
\emph{On the density of image of differential operators generated by polynomials},
J. Analyse Math. \textbf{52},  (1989), 291--300.  

\bibitem{St3}
{\sc Y. Stein}, 
\emph{Linear differential operators related to the Jacobian conjecture have a closed image},
J. Analyse Math. \textbf{54} (1990), 237--245. 

\bibitem{TZ}
{\sc M. Tib\u ar, A. Zaharia}, 
\emph{ Asymptotic behaviour of families of real curves},  Manuscripta Math. {\bf 99} (1999), 383--393.
\end{thebibliography}
\end{document}